\documentclass{amsart}
\usepackage{latexsym}
\usepackage{amstext}
\usepackage{amsmath}
\usepackage{amssymb}
\usepackage{amsthm}
\usepackage{mathrsfs} 
\usepackage{url}
\usepackage{comment}

\theoremstyle{plain}
\newtheorem{theorem}{Theorem}[section]

\newtheorem{corollary}[theorem]{Corollary}
\newtheorem{def-thm}[theorem]{Definition-Theorem}
\newtheorem{lemma}[theorem]{Lemma}

\theoremstyle{definition}
\newtheorem{definition}[theorem]{Definition}

\newtheorem{remark}[theorem]{Remark}
\newtheorem{example}[theorem]{Example}

\def \O {\mathcal{O}}
\def \II {\mathcal{I}}

\def \QQ {\mathbb{Q}}

\def \PP {\mathbb{P}}

\DeclareMathOperator{\Supp}{Supp}

\DeclareMathOperator{\codim}{codim}

\allowdisplaybreaks
\hfuzz50pc
\vfuzz50pc
\sloppy
\setlength{\parskip}{.066 in}

\begin{document}

\title[Schmidt-Nochka Theorem for closed subschemes]{A Schmidt-Nochka Theorem for closed subschemes in subgeneral position}

\begin{abstract}
In previous work, the authors established a generalized version of Schmidt's subspace theorem for closed subschemes in general position in terms of Seshadri constants. We extend our theorem to weighted sums involving closed subschemes in subgeneral position, providing a joint generalization of Schmidt's theorem with seminal inequalities of Nochka. A key aspect of the proof is the use of a lower bound for Seshadri constants of intersections from algebraic geometry, as well as a generalized Chebyshev inequality. As an application, we extend inequalities of Nochka and Ru-Wong from hyperplanes in $m$-subgeneral position to hypersurfaces in $m$-subgeneral position in projective space, proving a sharp result in dimensions $2$ and $3$, and coming within a factor of $3/2$ of a sharp inequality in all dimensions. We state analogous results in Nevanlinna theory generalizing the Second Main Theorem and Nochka's theorem (Cartan's conjecture).

\end{abstract}

\author{Gordon Heier}\author{Aaron Levin}
\address{Department of Mathematics\\University of Houston\\4800 Calhoun Road\\ Houston, TX 77204\\USA}
\email{heier@math.uh.edu}

\address{Department of Mathematics\\Michigan State University\\619 Red Cedar Road\\ East Lansing, MI 48824\\USA}
\email{adlevin@math.msu.edu}

\thanks{The first author was supported by a grant from the Simons Foundation (Grant Number 963755-GH)}
\thanks{The second author was supported in part by NSF grants DMS-2001205 and DMS-2302298}

\subjclass[2020]{11G35, 11G50, 11J87, 14C20, 14G40, 32H30}

\keywords{Schmidt's subspace theorem, Roth's theorem, Diophantine approximation, closed subschemes, Seshadri constants, Nochka weights}

\maketitle
\section{Introduction}
Schmidt's subspace theorem occupies a central position in the theory of higher-dimensional Diophantine approximation. It has been generalized by many authors, including Evertse and Ferretti (see \cite{ef_festschrift}), Corvaja and Zannier (see \cite{CZ}), and Ru and Vojta \cite{RV}. In our recent paper \cite{HL21}, we proved the following version in terms of closed subschemes in general position and their Seshadri constants with respect to a given ample divisor $A$ (see also recent work of Ru and Wang \cite{RW22} and Vojta \cite{vojta_subscheme} involving beta constants in place of Seshadri constants).
\begin{theorem}[{\cite[Theorem 1.3]{HL21}}]\label{ajm_main}
\label{AJM_mthm}
Let $X$ be a projective variety of dimension $n$ defined over a number field $k$.  Let $S$ be a finite set of places of $k$.  For each $v\in S$, let $Y_{0,v},\ldots, Y_{n,v}$ be closed subschemes of $X$, defined over $k$, and in general position.  Let $A$ be an ample Cartier divisor on $X$, and $\epsilon>0$.  Then there exists a proper Zariski-closed subset $Z\subset X$ such that for all points $P\in X(k)\setminus Z$,
\begin{equation*}
\sum_{v\in S}\sum_{i=0}^n \epsilon_{Y_{i,v}}(A) \lambda_{Y_{i,v},v}(P)< (n+1+\epsilon)h_A(P).
\end{equation*}
\end{theorem}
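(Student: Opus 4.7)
The plan is to reduce Theorem~\ref{ajm_main} to the classical Schmidt subspace theorem (or the Evertse--Ferretti version for divisors) by constructing, at each place $v\in S$, a single basis of $H^0(X,NA)$ that is approximately adapted to the vanishing filtrations associated to all of $Y_{0,v},\ldots,Y_{n,v}$ simultaneously; the Seshadri constants $\epsilon_{Y_{i,v}}(A)$ quantify how much vanishing order can be built into these sections.

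First, fix a large integer $N$, set $d_N=h^0(X,NA)$, and for a closed subscheme $Y\subset X$ with ideal sheaf $\II_Y$ form the decreasing filtration $V_{Y,m}=H^0(X,\II_Y^m\otimes\O(NA))$. Passing to the blow-up $\pi_Y\colon \tilde X_Y\to X$ along $Y$ with exceptional divisor $E_Y$, one has $V_{Y,m}\simeq H^0(\tilde X_Y, N\pi_Y^*A - mE_Y)$; by the definition of $\epsilon:=\epsilon_Y(A)$ (the supremum of $s>0$ such that $\pi_Y^*A-sE_Y$ is nef) and an asymptotic Riemann--Roch / volume computation on $\tilde X_Y$, for any $\delta>0$ and all $N$ sufficiently large,
\[
\sum_{m\geq 1}\dim V_{Y,m}\;\geq\;\Big(\tfrac{\epsilon}{n+1}-\delta\Big)\,N\,d_N.
\]
Equivalently, any basis of $H^0(X,NA)$ adapted to the filtration $\{V_{Y,m}\}$ has average vanishing order along $Y$ at least $(\tfrac{\epsilon}{n+1}-\delta)N$.

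Next, for each $v\in S$, use the general-position hypothesis on $Y_{0,v},\ldots,Y_{n,v}$ to produce a basis $\{\phi_{v,1},\ldots,\phi_{v,d_N}\}$ of $H^0(X,NA)$ that is simultaneously nearly adapted to all $n+1$ filtrations $\{V_{Y_{i,v},m}\}_{i=0}^{n}$, in such a way that the average vanishing order of the basis elements along each $Y_{i,v}$ still meets the bound above. Standard local comparisons give, for $\phi\in V_{Y,m}\setminus V_{Y,m+1}$ and $P\notin \Supp(Y)$,
\[
m\,\lambda_{Y,v}(P)\;\leq\;-\log\|\phi(P)\|_v+O(1),
\]
the implied constant depending only on $N$, $v$, and $Y$.

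Finally, apply the classical Schmidt subspace theorem to the collection $\{\phi_{v,j}\}$, viewed as linear forms under an embedding $X\hookrightarrow\PP^{d_N-1}$ coming from a fixed reference basis of $H^0(X,NA)$: outside a proper Zariski-closed subset of $X$,
\[
\sum_{v\in S}\sum_{j=1}^{d_N}\bigl(-\log\|\phi_{v,j}(P)\|_v\bigr)\;\leq\;(d_N+\epsilon')N\,h_A(P).
\]
Inserting the vanishing estimates and dividing through by $Nd_N/(n+1)$ produces the desired inequality, once $\delta,\epsilon'$ are chosen small and $N$ large. The main obstacle I expect is the middle step: arranging that a common basis realizes the Seshadri-constant bound \emph{simultaneously} for all $n+1$ subschemes at a given place, i.e., that the vanishing contributions from the $n+1$ distinct filtrations add up compatibly on one shared basis. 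General position is exactly what makes this achievable, either through a lexicographic / Mumford-type ordering argument or through a Chebyshev-style averaging in the spirit of Ru--Vojta \cite{RV}.
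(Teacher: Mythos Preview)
This theorem is not proved in the present paper: it is quoted verbatim from the authors' earlier work \cite{HL21} (their Theorem~1.3) and serves here only as a black-box input to the proof of Theorem~\ref{main}. There is therefore no proof in this paper against which to compare your proposal.

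That said, your sketch is broadly in line with the filtration strategy going back to Corvaja--Zannier, Evertse--Ferretti, and Autissier, and this is indeed the circle of ideas underlying \cite{HL21}: for large $N$ one builds a basis of $H^0(X,NA)$ adapted to the order-of-vanishing filtrations along the $Y_{i,v}$, controls the total vanishing via the Seshadri constants, and then applies Schmidt's subspace theorem in the ambient $\PP^{d_N-1}$. You have also correctly isolated the genuine crux, namely producing a \emph{single} basis at each place for which the vanishing contributions from all $n+1$ filtrations add up to roughly $\sum_i \epsilon_{Y_{i,v}}(A)\cdot \tfrac{N d_N}{n+1}$. Your proposal leaves this as an expected obstacle rather than resolving it; until that step is made precise (and it is not a formality---general position enters in an essential combinatorial way), what you have is an outline rather than a proof.
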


In the paper \cite{HL20}, we investigated the implications of this theorem for the degeneracy of integral points and entire curves in the complement of nef effective divisors, culminating in new (quasi-)hyperbolicity theorems under weak positivity assumptions based on seeking a certain lexicographical minimax.

In the present paper, we extend Theorem \ref{AJM_mthm} to the following Schmidt-Nochka-type Theorem for weighted sums involving arbitrary closed subschemes (i.e., without any general position assumption). 

\begin{theorem}
\label{main}
Let $X$ be a  projective variety of dimension $n$  defined over a number field $k$, and let $S$ be a finite set of places of $k$. For each $v\in S$, let $Y_{1,v},\ldots, Y_{q,v}$ be closed subschemes of $X$, defined over $k$ (not necessarily in general position), and let $c_{1,v},\ldots, c_{q,v}$ be nonnegative real numbers.  For a closed subset $W\subset X$ and $v\in S$, let
\begin{align*}
\alpha_v(W)=\sum_{\substack{i\\ W\subset \Supp Y_{i,v}}}c_{i,v}.
\end{align*}
Let $A$ be an ample Cartier divisor on $X$, and $\epsilon>0$.  Then there exists a proper Zariski closed subset $Z$ of $X$ such that
\begin{align*}
\sum_{v\in S}\sum_{i=1}^q c_{i,v}\epsilon_{Y_{i,v}}(A) \lambda_{Y_{i,v},v}(P)<  \left((n+1)\max_{\substack{v\in S\\ \emptyset\subsetneq W\subsetneq X}}\frac{\alpha_v(W)}{\codim W}+\epsilon\right)h_A(P)
\end{align*}
for all points $P\in X(k)\setminus Z$.
\end{theorem}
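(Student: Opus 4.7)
I would deduce Theorem~\ref{main} from Theorem~\ref{ajm_main} by a Nochka-style reduction. For each $v\in S$, the weighted sum over the $Y_{i,v}$ is reorganized along the flag of successive intersections via an Abel summation (the ``generalized Chebyshev inequality'') exploiting the definition of $\theta:=\max_{v,\,\emptyset\subsetneq W\subsetneq X}\alpha_v(W)/\codim W$, and an algebraic-geometric lower bound for Seshadri constants of intersections is then used to convert the resulting expression into one that fits the hypotheses of Theorem~\ref{ajm_main}.

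\textbf{Ordering and Abel summation.} Fix $v\in S$. Reorder so that $\mu_{i,v}(P):=\epsilon_{Y_{i,v}}(A)\lambda_{Y_{i,v},v}(P)$ is nonincreasing in $i$; only finitely many orderings arise, so this may be assumed uniformly by pigeonholing. Form the chain $T_{i,v}:=Y_{1,v}\cap\cdots\cap Y_{i,v}$ with codimensions $d_{i,v}$. Since $T_{i,v}\subset\Supp Y_{j,v}$ for every $j\leq i$, the definition of $\theta$ yields the partial-sum bound $\sum_{j=1}^{i}c_{j,v}\leq \alpha_v(T_{i,v})\leq \theta\,d_{i,v}$, and Abel summation then gives
\[
\sum_{i=1}^q c_{i,v}\mu_{i,v}\ \leq\ \theta\sum_{i\in J_v}(d_{i,v}-d_{i-1,v})\mu_{i,v},
\]
where $J_v:=\{i:d_{i,v}>d_{i-1,v}\}$ indexes the jumps of the flag and the weights $d_{i,v}-d_{i-1,v}$ sum to $d_{q,v}\leq n$.

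\textbf{Seshadri reduction and Theorem~\ref{ajm_main}.} The algebraic-geometric lower bound for Seshadri constants of intersections allows one to bound, for each $i\in J_v$, the contribution $(d_{i,v}-d_{i-1,v})\epsilon_{Y_{i,v}}(A)\lambda_{Y_{i,v},v}(P)$ by $\epsilon_{T_{i,v}}(A)\lambda_{T_{i,v},v}(P)$ up to $O_v(1)$, making use also of $\lambda_{T_{i,v},v}(P)=\lambda_{Y_{i,v},v}(P)+O_v(1)$, which comes from the chosen ordering and the properness of the successive intersections. The resulting subschemes $T_{i,v}$ for $i\in J_v$ have strictly increasing codimensions; after restricting to irreducible components and supplementing by trivial subschemes, they constitute a configuration in general position to which Theorem~\ref{ajm_main} applies, yielding $\sum_{v\in S}\sum_{i\in J_v}\epsilon_{T_{i,v}}(A)\lambda_{T_{i,v},v}(P)<(n+1+\epsilon')h_A(P)$ outside a proper Zariski-closed set $Z$, and hence the claimed inequality with constant $(n+1)\theta+\epsilon$.

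\textbf{Main obstacle.} The most delicate point is the Seshadri step: one must arrange that the codimension increment $d_{i,v}-d_{i-1,v}$ is absorbed into $\epsilon_{T_{i,v}}(A)$ compatibly with the telescoping, especially when a single step of the flag raises the codimension by more than one, and this is precisely where the intersection-Seshadri inequality is crucial. A secondary difficulty is the degenerate situation in which the chain $T_{\bullet,v}$ never reaches codimension $n$, so that all the $Y_{i,v}$ lie in a common positive-dimensional subvariety $W\subsetneq X$; this is handled by Noetherian induction on $\dim X$ applied to $W$, the relevant Nochka constant on $W$ being controlled by $\theta$.
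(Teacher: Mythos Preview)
Your overall architecture matches the paper's: order the subschemes at each $v$, form the flag of successive intersections $T_{i,v}$, use the Chebyshev/Abel-summation inequality with $b_j=d_{j,v}-d_{j-1,v}$, and then feed the intersections (repeated with multiplicity $d_{j,v}-d_{j-1,v}$) into Theorem~\ref{ajm_main}. However, your ``Seshadri reduction'' step, which you yourself flag as the delicate point, does not go through as written.

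The problem is twofold. First, you order by $\mu_{i,v}=\epsilon_{Y_{i,v}}(A)\lambda_{Y_{i,v},v}(P)$, but the identity you invoke, $\lambda_{T_{i,v},v}(P)=\lambda_{Y_{i,v},v}(P)+O_v(1)$, is a consequence of $\lambda_{T_{i,v},v}=\min_{j\le i}\lambda_{Y_{j,v},v}$ and hence requires that the \emph{unweighted} $\lambda_{Y_{j,v},v}(P)$ be nonincreasing in $j$. When the Seshadri constants $\epsilon_{Y_{i,v}}(A)$ differ, your ordering by $\mu_{i,v}$ gives no such control. Second, the intersection lower bound from \cite{PAGI} says only $\epsilon_{T_{i,v}}(A)\ge\min_{j\le i}\epsilon_{Y_{j,v}}(A)$, not $\epsilon_{T_{i,v}}(A)\ge\epsilon_{Y_{i,v}}(A)$; so even with the right $\lambda$-identity you cannot in general pass from $\epsilon_{Y_{i,v}}(A)\lambda_{Y_{i,v},v}(P)$ to $\epsilon_{T_{i,v}}(A)\lambda_{T_{i,v},v}(P)$. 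The paper resolves both issues simultaneously by a \emph{normalization}: it replaces each $Y_{i,v}$ by a thickening $\tilde Y_{i,v}:=c_v(1+\tau_{i,v})\epsilon_{Y_{i,v}}(A)\,Y_{i,v}$ so that $\epsilon_{\tilde Y_{i,v}}(A)$ is (up to a controlled error) the same constant for all $i$, and $\epsilon_{\tilde Y_{i,v}}(A)\lambda_{\tilde Y_{i,v},v}=\epsilon_{Y_{i,v}}(A)\lambda_{Y_{i,v},v}$. After this, ordering by $\lambda_{\tilde Y_{i,v},v}$ \emph{is} the ordering by $\mu_{i,v}$, the min-identity for the flag holds, and the Seshadri lower bound for intersections loses only an arbitrarily small $\epsilon$. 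This normalization is the missing ingredient in your sketch.

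Two minor points. The secondary ``degenerate'' case you propose to treat by Noetherian induction on $\dim X$ is not needed: once one truncates at the largest index $m_v$ with $\cap_{j\le m_v}Y_{i_j,v}\neq\emptyset$, the tail terms are bounded (since $\lambda_{\cap_{j\le m_v+1}Y_{i_j,v},v}=O(1)$), and the flag $\{T_{s,v}\}$ with multiplicities $d_{s,v}-d_{s-1,v}$ already has total length $d_{m_v,v}\le n\le n+1$, so Theorem~\ref{ajm_main} applies directly without any descent. Also, there is no need to ``restrict to irreducible components'': the closed subschemes $T_{s,v}$, each repeated $d_{s,v}-d_{s-1,v}$ times, are themselves in general position in the sense of Definition~\ref{gen_pos_def}, since the $j$th partial intersection has codimension at least $j$ by construction.
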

Our flexible use of weights in the above inequality appears to be completely novel and will be crucial in Section \ref{Nochkalow}. Furthermore, Theorem \ref{main} can be considered sharp by the following example.

\begin{example}\label{sharp}
Fix a point $P_0\in \mathbb{P}^n(k)$ and let $r\geq 1$ be an integer. Consider hyperplanes $H_1,\ldots, H_{rn}$ (not depending on $v\in S$) which intersect at $P_0$ but otherwise intersect generally. Let $H$ be a hyperplane intersecting $H_1,\ldots, H_{rn}$ generally and set $H_{rn+i}=H$, $i=1,\ldots, r$, and $c_{i,v}=1$ for all $i$ and $v\in S$. Then it is easy to verify that the hyperplanes are in $m$-subgeneral position with $m=rn$ and
\begin{align*}
\max_{\substack{v\in S\\ \emptyset\subsetneq W\subsetneq X}}\frac{\alpha_v(W)}{\codim W}=\frac{m}{n}=\frac{rn}{n}=r,
\end{align*}
where the maximum is attained at $W=P_0$.
Therefore, Theorem \ref{main}, with the ample divisor $A$ being any hyperplane and all Seshadri constants equal to $1$, yields the trivial bound 
\begin{align*}
\sum_{i=1}^{r(n+1)} m_{H_i,S}(P)< (r(n+1)+\epsilon)h(P)
\end{align*}
for all points $P\in \mathbb{P}^n(k)\setminus Z$. Note that any line $L$ passing through $P_0$  and not contained in any of the hyperplanes $H_1,\ldots,H_{rn}$ intersects  $H_1,\ldots, H_{r(n+1)}$ in at most two distinct points. Since such a line $L$ contains an infinite set of $(\sum_{i=1}^{r(n+1)}H_i,S)$-integral points (and such lines are Zariski dense in $\mathbb{P}^n$), we cannot replace $r(n+1)$ in the inequality above by anything smaller. This gives a family of examples where Theorem \ref{main} is sharp.
\end{example}

We now explain the connections with the classical work of Schmidt and Nochka. To begin, Theorem \ref{main} recovers a weighted closed subscheme version of many recent inequalities generalizing Schmidt's Subspace Theorem, including work of Quang \cite{Quang, Quang_Pacific, Quang22}, Ji-Yan-Yu \cite{JYY}, and Shi \cite{Shi}.  In particular, if $Y_{1,v},\ldots, Y_{q,v}$ are in general position and $c_{i,v}=1$ for all $i$ and $v$, we have by definition that $\codim W\geq \alpha_v(W)$ for all $v\in S$ and $W\neq \emptyset$, and we recover Theorem~\ref{AJM_mthm}. We discuss in more detail the implications and relations with this previous work in Section \ref{quang_section}.

On the other hand, to justify the reference to Nochka, we recall that in 1982-1983, Nochka \cite{Noc82a,Noc82b,Noc83} proved  a conjecture of Cartan on defects of holomorphic curves in $\PP^n$ relative to a possibly degenerate set of hyperplanes. Nochka's work was further explained and simplified by Chen \cite{Chen} and Ru and Wong \cite{RW91}, with the latter work proving versions of Nochka's results in Diophantine approximation, including the following inequality for proximity functions associated to hyperplanes in $m$-subgeneral position:  

\begin{theorem}[Ru-Wong \cite{RW91}]
\label{tRuWong}
Let $H_{1},\ldots, H_{q}$ be hyperplanes of $\mathbb{P}^n$, defined over a number field $k$, and in $m$-subgeneral position. Let $S$ be a finite set of places of $k$ and let $\epsilon>0$. Then there exists a finite union of hyperplanes $Z\subset\mathbb{P}^n$ such that
\begin{align}
\label{RWineq}
\sum_{i=1}^q m_{H_{i},S}(P)< (2m-n+1+\epsilon)h(P)
\end{align}
for all points $P\in \mathbb{P}^n(k)\setminus Z$.
\end{theorem}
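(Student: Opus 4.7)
My plan is to deduce Theorem~\ref{tRuWong} from Theorem~\ref{main} via the classical Nochka weight construction. We may assume $q > 2m - n + 1$, since otherwise the inequality follows from the trivial estimate $m_{H_i, S}(P) \leq h(P) + O(1)$ summed over $i$. By Nochka's lemma (cf.~\cite{Noc82a}, with simplified treatments in \cite{Chen, RW91}), there exist rational Nochka weights $\omega_1, \ldots, \omega_q \in (0, 1]$ and a Nochka constant $\theta \geq 1$ satisfying $\theta(n+1) \leq 2m - n + 1$ and
\[
\sum_{V \subset H_i} \omega_i \leq \codim V
\]
for every nonempty proper linear subspace $V \subsetneq \mathbb{P}^n$. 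This extends to arbitrary nonempty proper subvarieties $W$ by passing to the linear span of $W$: this replacement preserves the set of hyperplanes containing $W$ and does not increase the codimension.

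I would then apply Theorem~\ref{main} with $X = \mathbb{P}^n$, ample divisor $A$ a hyperplane (so $\epsilon_{H_i}(A) = 1$ for each $i$), and weights $c_{i,v} = \omega_i$. The preceding property gives $\max_{v, W} \alpha_v(W)/\codim W \leq 1$, and Theorem~\ref{main} yields
\[
\sum_i \omega_i m_{H_i, S}(P) < (n + 1 + \epsilon') h(P)
\]
for all $P \in \mathbb{P}^n(k) \setminus Z'$, where $Z' \subsetneq \mathbb{P}^n$ is a proper Zariski-closed subset and $\epsilon' > 0$ is prescribed. To convert this to the unweighted bound of Theorem~\ref{tRuWong}, invoke the classical Nochka-Cartan unweighting inequality: the explicit structure of the Nochka weights gives, for each $v \in S$ and $P \in \mathbb{P}^n(k)$,
\[
\sum_i m_{H_i, v}(P) \leq \theta \sum_i \omega_i m_{H_i, v}(P) + O_v(h_v(P)),
\]
with error terms absorbable into an arbitrarily small multiple of $h(P)$ after enlarging $Z'$ by a suitable finite union of hyperplanes. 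Combining with the preceding display and $\theta(n+1) \leq 2m - n + 1$ yields $\sum_i m_{H_i, S}(P) \leq (2m - n + 1 + \epsilon) h(P)$, as desired.

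The main obstacle is the Nochka-Cartan unweighting inequality invoked in the final step: it is the combinatorial heart of Nochka's original argument, resting on the explicit construction of the Nochka weights and a careful pointwise analysis of the hyperplanes closest to $P$ at each place $v$. A secondary issue is strengthening the exceptional set from an arbitrary proper Zariski-closed subset (as produced by Theorem~\ref{main}) to a finite union of hyperplanes as stated in Theorem~\ref{tRuWong}; this is handled by applying the argument inductively on each irreducible component of $Z'$ that is not already a hyperplane.
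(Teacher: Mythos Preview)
The paper does not give its own proof of Theorem~\ref{tRuWong}; it is quoted as a classical result of Ru--Wong and used as motivation. The paper does observe (just after Theorem~\ref{RWNochka}) that the \emph{weighted} inequality of Theorem~\ref{RWNochka} follows from Theorem~\ref{main}, which is exactly your first step, so up through the display $\sum_i \omega_i m_{H_i,S}(P) < (n+1+\epsilon')h(P)$ your argument is correct and matches what the paper says.

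The genuine gap is your unweighting step. The local inequality you write,
\[
\sum_i \lambda_{H_i,v}(P) \leq \theta \sum_i \omega_i \lambda_{H_i,v}(P) + O_v(h_v(P)),
\]
is either false or vacuous depending on how $O_v(h_v(P))$ is interpreted. Since $\theta\omega_i\leq 1$, each term $(1-\theta\omega_i)\lambda_{H_i,v}(P)$ is nonnegative and can be as large as $\lambda_{H_i,v}(P)$ itself; the resulting error is of order $q\cdot h(P)$ after summing over $v\in S$, and this cannot be ``absorbed into an arbitrarily small multiple of $h(P)$'' by deleting finitely many hyperplanes. The correct classical unweighting (as in \cite{RW91}) passes through \emph{counting functions}: writing $m_{H_i,S}=h-N_{H_i,S}+O(1)$, the weighted bound becomes
\[
\sum_i \omega_i N_{H_i,S}(P) > \Big(\sum_i\omega_i - n-1-\epsilon'\Big)h(P)-O(1);
\]
now $\theta\omega_i\leq 1$ and $N_{H_i,S}\geq -O(1)$ give $\sum_i N_{H_i,S}\geq \theta\sum_i\omega_i N_{H_i,S}-O(1)$, and the Nochka identity $\theta\big(\sum_i\omega_i-(n+1)\big)=q-(2m-n+1)$ converts this back to $\sum_i m_{H_i,S}(P)<(2m-n+1+\theta\epsilon')h(P)+O(1)$. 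Your proposal invokes a placeholder inequality that does not do this job; you need either this counting-function trick or the equivalent product-replacement lemma (Nochka's property (iv)).

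A secondary point: your plan to upgrade $Z'$ to a finite union of hyperplanes by ``applying the argument inductively on each irreducible component'' does not work, since Theorem~\ref{main} applied to a nonlinear component has no reason to produce linear exceptional loci. The linearity of $Z$ in Theorem~\ref{tRuWong} comes from the fact that the underlying input in \cite{RW91} is Schmidt's theorem for linear forms, whose exceptional set is already linear; the paper explicitly notes that its own derivation of Theorem~\ref{RWNochka} from Theorem~\ref{main} does \emph{not} recover the linearity of $Z$.
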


It is known that the quantity $2m-n+1$ in the inequality is sharp for all values of $m\geq n$ (for an appropriate configuration of hyperplanes).  A key inequality in Ru and Wong's work is the following inequality involving a sum of local heights weighted by Nochka weights (the analogous inequality in Nevanlinna theory plays a similar key role in Nochka's proof of Cartan's conjecture):
\begin{theorem}[Ru-Wong \cite{RW91}]
\label{RWNochka}
Let $S$ be a finite set of places of a number field $k$, and for each $v\in S$, let $H_{1,v},\ldots, H_{q,v}$ be hyperplanes of $\mathbb{P}^n$, defined over $k$ and in $m$-subgeneral position, with associated Nochka weights $\omega_{1,v},\ldots, \omega_{q,v}$. Let $\epsilon>0$. Then there exists a finite union of hyperplanes $Z$ of $\mathbb{P}^n$ such that
\begin{align*}
\sum_{v\in S}\sum_{i=1}^q \omega_{i,v}\lambda_{H_{i,v},v}(P)< (n+1+\epsilon)h(P)
\end{align*}
for all points $P\in \mathbb{P}^n(k)\setminus Z$.
\end{theorem}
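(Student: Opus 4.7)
The plan is to deduce Theorem~\ref{RWNochka} directly from Theorem~\ref{main} by taking the closed subschemes to be the hyperplanes themselves and the weights $c_{i,v}$ to be the Nochka weights $\omega_{i,v}$. Setting $X=\PP^n$, $Y_{i,v}=H_{i,v}$, $c_{i,v}=\omega_{i,v}$, and choosing the ample divisor $A$ to be any hyperplane (so $h_A=h$ and $\epsilon_{H_{i,v}}(A)=1$), the conclusion of Theorem~\ref{main} specializes to
\begin{equation*}
\sum_{v\in S}\sum_{i=1}^{q}\omega_{i,v}\lambda_{H_{i,v},v}(P)<\left((n+1)\max_{\substack{v\in S\\ \emptyset\subsetneq W\subsetneq\PP^n}}\frac{\alpha_v(W)}{\codim W}+\epsilon\right)h(P),
\end{equation*}
valid for $P$ outside some proper Zariski-closed subset of $\PP^n$. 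The target inequality $(n+1+\epsilon)h(P)$ then follows as soon as one shows that the displayed maximum is at most $1$.

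The heart of the argument is the verification of this bound via the defining property of the Nochka weights. Fix $v\in S$ and a non-empty proper closed subset $W\subsetneq\PP^n$, and set $R:=\{i:H_{i,v}\supset W\}$. Because the hyperplanes $\{H_{i,v}\}_{i\in R}$ all contain $W\neq\emptyset$, their common intersection is non-empty, so the $m$-subgeneral position hypothesis forces $|R|\leq m$. The standard property of Nochka weights (see, e.g., Ru, \emph{Nevanlinna Theory and Its Relation to Diophantine Approximation}, Theorem~A2.1.6, or \cite{RW91}) then provides a Nochka constant $\omega_v\in(0,1]$ for which $\sum_{i\in R}\omega_{i,v}\leq \omega_v\cdot \rk(R)$, where $\rk(R)$ is the rank of the linear forms defining $\{H_{i,v}\}_{i\in R}$. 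Since $W\subseteq \bigcap_{i\in R}H_{i,v}$, we have $\rk(R)=\codim\bigl(\bigcap_{i\in R}H_{i,v}\bigr)\leq\codim W$, and therefore $\alpha_v(W)=\sum_{i\in R}\omega_{i,v}\leq \omega_v\codim W\leq \codim W$, as required.

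The only remaining issue is that Theorem~\ref{main} provides an exceptional set that is only a proper Zariski-closed subset of $\PP^n$, whereas Theorem~\ref{RWNochka} asks for a finite union of hyperplanes. This upgrade is a standard descent argument: irreducible components of the exceptional set which are not themselves hyperplanes can be absorbed by restricting Theorem~\ref{main} to each such component and iterating on the dimension, as in the original Ru--Wong proof. The principal conceptual obstacle in this deduction is recognizing that the Nochka property $\sum_{i\in R}\omega_{i,v}\leq \omega_v\,\rk(R)$ is precisely the hypothesis that forces $\max_{v,W}\alpha_v(W)/\codim W\leq 1$; once this match is seen, the derivation is immediate from the flexibility of Theorem~\ref{main} in allowing arbitrary nonnegative weights.
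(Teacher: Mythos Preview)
Your proposal is correct and follows exactly the approach the paper itself indicates: apply Theorem~\ref{main} with $Y_{i,v}=H_{i,v}$, $c_{i,v}=\omega_{i,v}$, and $A$ a hyperplane, then invoke the fundamental Nochka property $\sum_{W\subset H_{i,v}}\omega_{i,v}\leq \codim W$ to bound the maximum by~$1$. The paper explicitly notes that this derivation recovers Theorem~\ref{RWNochka} \emph{``excluding the linearity of the exceptional set $Z$''}; your remark that the upgrade to a union of hyperplanes requires the separate descent argument from the original Ru--Wong proof is consistent with this, and is not something the paper claims to obtain from Theorem~\ref{main} alone.
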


A fundamental property of Nochka weights is that for a nonempty closed subset $W\subset X$ and $v\in S$,
\begin{align*}
\sum_{\substack{i\\ W\subset H_{i,v}}}\omega_{i,v}\leq \codim W.
\end{align*}
Thus, we see that Theorem \ref{RWNochka} is a consequence of Theorem \ref{main} (with $c_{i,v}=\omega_{i,v}$ and $Y_{i,v}=H_{i,v}$), excluding the linearity of the exceptional set $Z$. Then we may view Theorem \ref{main} as jointly generalizing Schmidt's Subspace Theorem as well as inequalities arising out of Nochka's work and its extensions.

Just as Theorem \ref{RWNochka} is a key ingredient in proving the inequality \eqref{RWineq}, we derive from Theorem \ref{main} a version of Ru-Wong's inequality \eqref{RWineq} for arbitrary effective divisors, under a Bezout-type intersection assumption which is, in particular, valid for projective space. 

\begin{theorem}\label{nochka_type_theorem}
Let $X$ be a  projective variety of dimension $n$  defined over a number field $k$ and let $S$ be a finite set of places of $k$.  Let $D_1,\ldots, D_q$ be effective Cartier divisors on $X$, defined over $k$, in $m$-subgeneral position.  Let $D_I=\cap_{i\in I}D_i$.  We assume the following Bezout property holds for intersections among the divisors: If $I,J\subset \{1,\ldots, q\}$ then
\begin{align*}
\codim D_{I\cup J}=\codim (D_I\cap D_J)\leq \codim D_I+\codim D_J.
\end{align*}
Let $A$ be an ample divisor on $X$ and let $\epsilon>0$.  Then there exists a proper Zariski closed subset $Z$ of $X$ such that
\begin{align}
\label{almostNochka}
\sum_{i=1}^q \epsilon_{D_{i}}(A) m_{D_{i},S}(P)< \left(\frac{3}{2}(2m-n+1)+\epsilon\right)h_A(P)
\end{align}
for all $P\in X(k) \setminus Z$.
\end{theorem}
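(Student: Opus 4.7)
The strategy is to apply Theorem~\ref{main} to the divisors $D_1,\ldots,D_q$ with weights $c_i$ produced by a Nochka-type construction adapted to the Bezout setting, and then divide by the minimum weight to pass from the resulting weighted sum to the unweighted sum appearing in the statement.

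The core of the proof is the construction of rational weights $c_1,\ldots,c_q\in (0,1]$, independent of $v$ (since the $D_i$ do not depend on $v$), satisfying:
\begin{enumerate}
\item for every proper nonempty closed subvariety $W\subsetneq X$,
$\sum_{i\in I(W)} c_i \leq \codim W$, where $I(W)=\{i:W\subset \Supp D_i\}$;
\item $\min_i c_i \geq \tfrac{2(n+1)}{3(2m-n+1)}$.
\end{enumerate}
Condition~(i) forces $\max_{W,v}\alpha_v(W)/\codim W\leq 1$ in Theorem~\ref{main}, so that the right-hand side constant reduces to $n+1$. Establishing (i) and (ii) simultaneously is a constrained combinatorial optimization: we have the $m$-subgeneral position at our disposal (which forces $|I(W)|\leq m$) and the Bezout property (which gives $\codim D_I\leq |I|$ by induction). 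The generalized Chebyshev-type inequality advertised in the introduction is the tool used to solve this optimization and certify the lower bound in~(ii). The algebro-geometric lower bound on Seshadri constants of intersections enters when intersection subschemes of the $D_i$ must be incorporated during the construction, ensuring that the factors $\epsilon_{D_i}(A)$ do not degrade through the argument.

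With such weights in hand, Theorem~\ref{main} produces a proper Zariski-closed $Z\subset X$ with
\[
\sum_{v\in S}\sum_{i=1}^q c_i\,\epsilon_{D_i}(A)\,\lambda_{D_i,v}(P) < (n+1+\epsilon)\, h_A(P)
\]
for all $P\in X(k)\setminus Z$. Using $m_{D_i,S}(P)=\sum_{v\in S}\lambda_{D_i,v}(P)$ and dividing through by $\min_i c_i\geq \tfrac{2(n+1)}{3(2m-n+1)}$ yields
\[
\sum_{i=1}^q \epsilon_{D_i}(A)\,m_{D_i,S}(P) \leq \tfrac{3(2m-n+1)}{2(n+1)}(n+1+\epsilon)\,h_A(P) < \left(\tfrac{3}{2}(2m-n+1)+\epsilon'\right)h_A(P),
\]
as desired. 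The principal obstacle is the weight construction: in the hyperplane case, classical Nochka theory uses the submodularity of the codimension of linear spans to achieve the stronger bound $\min_i c_i\geq (n+1)/(2m-n+1)$, recovering the sharp constant $2m-n+1$ of Ru-Wong (Theorem~\ref{tRuWong}). With only the subadditive Bezout codimension available, the best lower bound we can establish is weaker by a factor of $\tfrac{3}{2}$, which is the source of the factor in the statement. In dimensions $n=2,3$ an additional low-dimensional case analysis (exploiting that $\codim W\in\{1,2,3\}$ allows an explicit finite-type argument) recovers the sharp $2m-n+1$.
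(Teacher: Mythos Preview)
Your proposal has a genuine gap: the entire content of the proof is the construction of weights $c_i$ satisfying (i) and (ii), and you have not constructed them. You assert their existence and claim that ``the generalized Chebyshev-type inequality \ldots\ is the tool used to solve this optimization,'' but that inequality (Lemma~\ref{elemineq}) is used inside the proof of Theorem~\ref{main}, which you are invoking as a black box; it plays no role in producing Nochka-type weights. The same applies to the Seshadri lower bound for intersections: that is also internal to Theorem~\ref{main}, and has nothing to do with the combinatorial problem you set up. So the tools you cite do not address the step you have left open, and the existence of such weights under only the subadditive Bezout hypothesis is precisely the nontrivial point. (Even for hyperplanes, the classical Nochka weights are not bounded below by $(n+1)/(2m-n+1)$; the derivation of $2m-n+1$ from Theorem~\ref{RWNochka} uses finer identities among the weights, not division by the minimum.)

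The paper's proof takes a different and more concrete route that avoids constructing full Nochka weights. If $\alpha(W)/\codim W\leq (2m-n+1)/(n+1)$ for all $W$, Theorem~\ref{main} with $c_i=1$ already gives the sharp constant $2m-n+1$. Otherwise one picks a single closed set $W_0$ maximizing $(n+1-\codim W)/(2m-n+1-\alpha(W))$, applies the trivial height bound to the divisors containing $W_0$, and applies Theorem~\ref{main} with $c_i=1$ to the remaining divisors (this is Corollary~\ref{W0cor}). The Bezout hypothesis and $m$-subgeneral position are then used directly, via a two-case analysis depending on whether $W\cap W_0=\emptyset$, to bound $(\alpha(W)-\alpha(W\cup W_0))/\codim W$ by $1/\sigma$, where $\sigma$ is the slope at $W_0$. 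The factor $3/2$ emerges from the elementary fact that $(\alpha(W_0),\codim W_0)$ lies below the intersection of the lines $y=\tfrac{n+1}{2m-n+1}x$ and $y=x+n-m$, forcing $\alpha(W_0)<\tfrac{2m-n+1}{2}$ and $(n+1)/\sigma<2m-n+1$.
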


As is well-known, the Bezout property holds when $X=\mathbb{P}^n$, and if $D_1,\ldots, D_q$ are hypersurfaces in $\mathbb{P}^n$ of degrees $d_1,\ldots, d_q\geq 1$, and $h_A=h$ is the standard height (associated to $A$ a hyperplane), we have $\epsilon_{D_i}(A)=\frac{1}{d_i}$ and the inequality \eqref{almostNochka} becomes (outside some proper closed subset of $\mathbb{P}^n$):
\begin{align*}
\sum_{i=1}^q \frac{1}{d_i}m_{D_{i},S}(P)< \left(\frac{3}{2}(2m-n+1)+\epsilon\right)h(P).
\end{align*}
Thus, we obtain a version of (Nochka-)Ru-Wong's Theorem \ref{tRuWong} with an extra factor of $\frac{3}{2}$, but valid for arbitrary hypersurfaces in projective space. This inequality does not seem to have been previously known (for all $m$ and $n$) even with $\frac{3}{2}$ replaced by an arbitrarily large constant; an inequality with a factor of $(m-n+1)(n+1)+\epsilon$ on the right-hand side follows from work of Quang \cite{Quang}.

In low dimensions ($n\leq 3$) we are able to remove the factor $\frac{3}{2}$ and prove a full generalization of Ru-Wong's theorem to hypersurfaces in projective space.

\begin{theorem}
\label{nochka_type_theorem2}
Let $n\leq 3$ be a positive integer.  Let $D_1,\ldots, D_q$ be effective divisors on $\mathbb{P}^n$, defined over $k$, in $m$-subgeneral position, of degrees $d_1,\ldots, d_q$.  Let $\epsilon>0$ and let $S$ be a finite set of places of $k$. Then there exists a proper Zariski closed subset $Z$ of $\mathbb{P}^n$ such that
\begin{align*}
\sum_{i=1}^q \frac{1}{d_i} m_{D_{i},S}(P)<(2m-n+1+\epsilon)h(P)
\end{align*}
for all $P\in \mathbb{P}^n(k) \setminus Z$.
\end{theorem}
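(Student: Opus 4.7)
The plan is to apply Theorem \ref{main} with $c_{i,v} = \omega_i$, where $\omega_1,\ldots,\omega_q \in (0,1]$ are constructed to serve as Nochka-type weights for the effective divisors $D_1,\ldots,D_q$. Since $X = \mathbb{P}^n$ and we take $A$ a hyperplane, $\epsilon_{D_i}(A) = 1/d_i$, and Theorem \ref{main} then reads
\[
\sum_{i=1}^q \omega_i \frac{m_{D_i,S}(P)}{d_i} < \bigl((n+1)\,\mu(\omega) + \epsilon\bigr)\, h(P), \quad \mu(\omega) := \max_{\emptyset\subsetneq W\subsetneq \mathbb{P}^n} \frac{\sum_{i:\, W\subset \supp D_i} \omega_i}{\codim W}.
\]
If I can arrange both $\mu(\omega)\le 1$ and $\omega_i \ge (n+1)/(2m-n+1)$ for every $i$, then dividing the displayed inequality by $\min_i \omega_i$ yields the desired bound. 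The task thus reduces to constructing weights with these two properties.

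Theorem \ref{nochka_type_theorem} presumably carries out precisely such a construction but only achieves a lower bound of $2(n+1)/[3(2m-n+1)]$ on the $\omega_i$, which is exactly what produces the factor $3/2$. I would sharpen this by adapting Nochka's classical construction (in the form given by Chen, or by Ru-Wong \cite{RW91}) to the divisor setting. In its abstract form, Nochka's construction depends only on the codimensions of intersections and on a submodularity property---both of which are available here through the Bezout-type hypothesis, which replaces the usual linear-algebraic input. Concretely, one identifies a ``critical'' subcollection where $\sum_{i} \omega_i = \codim(\cap_i D_i)$ is forced, renormalizes its weights by the Nochka constant $\theta \le (2m-n+1)/(n+1)$, and recurses on the complement.

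The principal obstacle is that the Bezout property controls only the codimensions of intersections $D_I = \bigcap_{i\in I} D_i$, and not their reducibility or the incidence structure of their components; for general $n$ this loss of rigidity apparently costs the factor $3/2$. For $n\le 3$, however, the relevant codimensions lie in $\{1,2,3\}$, so the intersection poset has bounded height and the recursive step of Nochka's algorithm reduces to a manageable finite case analysis. I would therefore run Nochka's recursion, keeping track of critical subcollections at each codimension, and verify, case by case for $n = 1, 2, 3$, that the algorithm terminates with weights satisfying $\mu(\omega)\le 1$ and $\omega_i\ge (n+1)/(2m-n+1)$. The dimension-$3$ case, where critical subcollections can coexist at codimensions $1$, $2$, and $3$ simultaneously and the order in which one normalizes them matters, should be the most delicate step.
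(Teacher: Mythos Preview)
Your reduction to finding weights $\omega_i$ with $\mu(\omega)\le 1$ and $\omega_i\ge (n+1)/(2m-n+1)$ is too strong: such weights need not exist. Take $n=2$, $m=3$, and let $D_1,D_2$ be two effective divisors whose common support is a single line $L$ (together with, say, two further generic lines). This is a valid $3$-subgeneral configuration. The condition $\mu(\omega)\le 1$ evaluated at $W=L$ forces $\omega_1+\omega_2\le \codim L=1$, while your lower bound demands $\omega_1+\omega_2\ge 2\cdot\frac{3}{5}=\frac{6}{5}$. So no admissible weights exist, and ``divide by $\min_i\omega_i$'' cannot close the argument. (This is not a pathology of repeated divisors only: even for distinct hyperplanes the classical Nochka weights are not bounded below by $(n+1)/(2m-n+1)$, and the classical proof does not proceed by dividing by the minimum weight.)

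The paper avoids this in two ways. First, it replaces your crude division by a splitting (Theorem~\ref{generalnochka}): with $\tau=\max_i\omega_i$ one writes
\[
\sum_i c_i\,\epsilon_{D_i}(A)\,m_{D_i,S}
=\sum_i c_i\Bigl(1-\tfrac{\omega_i}{\tau}\Bigr)\epsilon_{D_i}(A)\,m_{D_i,S}
+\frac{1}{\tau}\sum_i c_i\omega_i\,\epsilon_{D_i}(A)\,m_{D_i,S},
\]
bounds the first sum trivially via Lemma~\ref{lemma2}, and applies Theorem~\ref{main} to the second. The resulting coefficient is $B=(n+1)/\tau+\sum_i c_i(1-\omega_i/\tau)$, which tolerates some $\omega_i$ well below $(n+1)/(2m-n+1)$. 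Second, the paper does not run a full Nochka recursion. For $n\le 3$ it observes that only codimension-$1$ subsets can violate the direct bound from Theorem~\ref{main}; after reducing to divisors with irreducible support (Theorem~\ref{theorempic1}, using $\rho(\mathbb{P}^n)=1$), it singles out the worst codimension-$1$ component $W_0$, sets $\omega_i=1/c$ for $\Supp D_i=W_0$ and $\omega_i=n/(2m-n+1-c)$ otherwise, and verifies by a short case check that $\mu(\omega)\le 1$ and $B=2m-n+1$ exactly. In the example above this gives $\omega_1=\omega_2=\tfrac12$, confirming that the correct weights lie below your threshold.
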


In fact, we prove more general inequalities in dimension at most $3$ under suitable geometric assumptions, and more generally for weighted sums of proximity functions (Theorems \ref{nochka23irred} and \ref{theorempic1}).

Using the well-known correspondence between statements in Diophantine approximation and Nevanlinna theory \cite{Vojta_LNM}, the proof of Theorem \ref{main} can be adapted to prove the following generalization of the Second Main Theorem in Nevanlinna theory:

\begin{theorem}
\label{mthmNev}
Let $X$ be a complex projective variety of dimension $n$. Let $Y_1,\ldots, Y_q$ be closed subschemes of $X$  and let $c_{1},\ldots, c_q$ be nonnegative real numbers.  Let $f:\mathbb{C}\to X$ be a holomorphic map with Zariski dense image, $A$ an ample Cartier divisor on $X$, and $\epsilon>0$.  Then 
\begin{equation*}
\int_{0}^{2\pi}\max_J \sum_{j\in J} c_j\epsilon_{Y_j}(A) \lambda_{Y_j}(f(re^{i\theta}))\frac{d\theta}{2\pi}\leq_{\operatorname{exc}} (\Delta(n+1)+\epsilon)T_{f,A}(r),
\end{equation*}
where the maximum is taken over all subsets $J$ of $\{1,\dots, q\}$ such that for every nonempty closed subset $W\subset X$,
\begin{align*}
\sum_{\substack{j\in J\\ W\subset \Supp Y_{j}}}c_{j}\leq \Delta \codim W.
\end{align*}
\end{theorem}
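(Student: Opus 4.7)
The plan is to parallel the proof of Theorem \ref{main} via the standard Vojta dictionary: replace the finite sum $\sum_{v\in S}$ by the integral $\int_0^{2\pi}\frac{d\theta}{2\pi}$, the local Weil functions $\lambda_{Y,v}(P)$ by $\lambda_{Y}(f(re^{i\theta}))$, the height $h_A(P)$ by the characteristic function $T_{f,A}(r)$, and the ``outside a proper Zariski closed set'' conclusion by the Zariski-density hypothesis on the image of $f$, with the Diophantine exceptional set absorbed into the standard \emph{exc} qualifier on $r$. The invocation of the Subspace Theorem at the end of the proof of Theorem \ref{main} is correspondingly replaced by the Ru--Vojta Second Main Theorem in Nevanlinna form.

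To accommodate the $\max_J$ inside the integral, let $\mathcal{J}$ be the (finite) collection of subsets $J\subseteq\{1,\ldots,q\}$ for which $\sum_{j\in J,\, W\subset \Supp Y_j} c_j \leq \Delta\,\codim W$ holds for every nonempty closed $W\subsetneq X$. For each $\theta\in[0,2\pi)$, select $J(\theta)\in \mathcal{J}$ attaining the pointwise maximum; since $\mathcal{J}$ is finite and the integrand is continuous in $\theta$, this can be done Borel-measurably (say by lexicographic tie-breaking), yielding a measurable partition $[0,2\pi)=\bigsqcup_{J\in\mathcal{J}}E_J$ with $E_J=\{\theta:J(\theta)=J\}$. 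Setting $c_{j,\theta}:=c_j\mathbf{1}_{j\in J(\theta)}$, the integrand rewrites as $\sum_{j=1}^q c_{j,\theta}\,\epsilon_{Y_j}(A)\,\lambda_{Y_j}(f(re^{i\theta}))$, and the associated $\theta$-dependent weights satisfy $\sum_{j:\,W\subset\Supp Y_j} c_{j,\theta} \leq \Delta\,\codim W$ uniformly in $\theta$. This plays exactly the role of the bound $\alpha_v(W)/\codim W\leq\Delta$ from the Diophantine setting.

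With this reduction in place, I would execute the proof of Theorem \ref{main} on each piece $E_J$ in turn: first use the lower bound on Seshadri constants of intersections to construct, from $H^0(X,NA)$ for $N$ large, a filtration of sections adapted to $\{Y_j:j\in J\}$ (a purely algebraic step that transfers unchanged); then apply the generalized Chebyshev inequality pointwise in $\theta\in E_J$ to convert the weighted sum into a form compatible with the filtration, with the uniform constant $\Delta$ appearing precisely because $\alpha_\theta(W)/\codim W\leq\Delta$; and finally close the estimate using the Nevanlinna Ru--Vojta SMT for the resulting divisors in general position, which contributes the factor $n+1$. Summing over the finitely many $J\in\mathcal{J}$ recombines the $E_J$ and yields the desired bound $(\Delta(n+1)+\epsilon)T_{f,A}(r)$ modulo \emph{exc}.

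The principal obstacle is essentially bookkeeping: Theorem \ref{main}'s argument handles a \emph{discrete} family of weight configurations $v\mapsto(c_{i,v})_i$, whereas here the weight configuration $\theta\mapsto(c_{j,\theta})_j$ varies measurably over $[0,2\pi)$. Finiteness of $\mathcal{J}$ is what rescues us: after the measurable partition there are only $|\mathcal{J}|$ distinct configurations to treat, and no new analytic input is required beyond the Nevanlinna SMT itself, together with the observation that the Seshadri filtration and Chebyshev rearrangement are place-independent (equivalently, $\theta$-independent) and so transfer without modification.
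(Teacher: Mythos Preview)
Your overall strategy---translate the proof of Theorem~\ref{main} via the Vojta dictionary, handling the $\max_J$ by a finite measurable partition of $[0,2\pi)$---is exactly what the paper intends; indeed the paper offers no separate proof beyond the sentence ``the proof of Theorem~\ref{main} can be adapted.'' So at the level of approach you are aligned with the paper.

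That said, your sketch of what happens \emph{inside} the proof of Theorem~\ref{main} is inaccurate in a way that could trip you up when you actually carry it out. The proof in Section~\ref{section_main_thm_proof} does \emph{not} construct a filtration of $H^0(X,NA)$; that machinery lives in the proof of Theorem~\ref{AJM_mthm} in \cite{HL21}, which is used here as a black box. The actual argument is: (i) normalize the Seshadri constants by thickening each $Y_j$ to $\tilde Y_j$; (ii) for each $\theta$, order the $\tilde Y_j$ by decreasing $\lambda_{\tilde Y_j}(f(re^{i\theta}))$ and form the nested intersections $\tilde Y_{I_s}$; (iii) apply Corollary~\ref{cor_elemineq} (the Chebyshev step) with $b_j=\codim \tilde Y_{I_j}-\codim \tilde Y_{I_{j-1}}$; (iv) use \cite[Example~5.4.11]{PAGI} to bound $\epsilon_{\tilde Y_{I_s}}(A)$ below; and (v) apply the Nevanlinna analogue of Theorem~\ref{AJM_mthm} (the Seshadri-constant SMT for closed subschemes in general position, as in \cite{HL21} or \cite{RW22}) to the list in which $\tilde Y_{I_s}$ is repeated $b_s-b_{s-1}$ times. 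The relevant input at step (v) is the closed-subscheme SMT of \cite{HL21}, not the Ru--Vojta SMT for divisors directly. Once you correct this internal picture, your partition-and-transfer argument goes through unchanged: the ordering in (ii) depends on $\theta$, but only finitely many orderings occur, so only finitely many applications of the SMT are needed, just as only finitely many applications of Theorem~\ref{AJM_mthm} were needed in the arithmetic proof.
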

The notation $\leq_{\operatorname{exc}}$ means that the inequality holds for all $r\in (0,\infty)$ outside of a set of finite Lebesgue measure. When $c_i=1$ for all $i$, this was proven (with a slightly different $\Delta$) independently by Quang \cite{Quang22} (see the remarks on the analogous Diophantine result after Theorem \ref{Quangmain}).

Analogous to Theorems \ref{nochka_type_theorem} and \ref{nochka_type_theorem2} and their proofs, we have the following consequences for holomorphic curves and proximity functions associated to divisors in $m$-subgeneral position. In particular, we extend Nochka's theorem (Cartan's conjecture) to hypersurfaces in projective space, however with an extra factor of $\frac{3}{2}$ on the right-hand side of the inequality (and we eliminate this extra factor in dimensions $\leq 3$).

\begin{theorem}\label{nochka_type_theorem_complex}
Let $X$ be a complex projective variety of dimension $n$.  Let $D_1,\ldots, D_q$ be effective Cartier divisors on $X$ in $m$-subgeneral position.  let $D_I=\cap_{i\in I}D_i$.  Assume the following Bezout property holds for intersections among the divisors: If $I,J\subset \{1,\ldots, q\}$ then
\begin{align*}
\codim D_{I\cup J}=\codim (D_I\cap D_J)\leq \codim D_I+\codim D_J.
\end{align*}
Let $f:\mathbb{C}\to X$ be a holomorphic map with Zariski dense image, $A$ an ample Cartier divisor on $X$, and $\epsilon>0$.  Then 
\begin{align*}
\sum_{i=1}^q \epsilon_{D_{i}}(A) m_{f,D_i}(r)\leq_{\operatorname{exc}} \left(\frac{3}{2}(2m-n+1)+\epsilon\right)T_{f,A}(r)(P).
\end{align*}
\end{theorem}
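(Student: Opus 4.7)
The plan is to deduce this from Theorem \ref{mthmNev}, mirroring the way Theorem \ref{nochka_type_theorem} is deduced from Theorem \ref{main}, with two ingredients doing all the work: the Nevanlinna Schmidt--Nochka-type input Theorem \ref{mthmNev}, and a purely combinatorial reduction exploiting the $m$-subgeneral position and Bezout hypotheses.

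I would begin by applying Theorem \ref{mthmNev} with $c_i=1$ for all $i$ and parameter $\Delta=1$. With this choice, the sets $J \subset \{1,\ldots,q\}$ over which the maximum is taken are exactly the ``sub-configurations in general position'', namely those with $|\{j\in J : W\subset \Supp D_j\}|\leq \codim W$ for every nonempty closed $W\subset X$. Theorem \ref{mthmNev} then gives
\[
\int_0^{2\pi} \max_J \sum_{j\in J}\epsilon_{D_j}(A)\,\lambda_{D_j}(f(re^{i\theta}))\,\frac{d\theta}{2\pi}\leq_{\operatorname{exc}} (n+1+\epsilon)\,T_{f,A}(r).
\]
The crux is then a pointwise combinatorial (``generalized Chebyshev'') inequality: for any nonnegative reals $a_1,\ldots,a_q$,
\[
\sum_{i=1}^q a_i \;\leq\; \tfrac{3}{2}\cdot\tfrac{2m-n+1}{n+1}\cdot \max_J \sum_{j\in J} a_j,
\]
with $J$ ranging over the same admissible subsets. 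By LP duality this amounts to producing a probability distribution on admissible $J$'s under which every $i$ is chosen with probability at least $\tfrac{2(n+1)}{3(2m-n+1)}$. Such a distribution is built from a Nochka-style weight recipe that uses the bound $|\{i : W\subset \Supp D_i\}|\leq m$ coming from $m$-subgeneral position and the bound $\codim\bigcap_{i\in I} D_i \leq |I|$ coming from Bezout to control the configuration's defect from being in general position.

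Applying the combinatorial inequality pointwise in $\theta$ with $a_i = \epsilon_{D_i}(A)\lambda_{D_i}(f(re^{i\theta}))$, integrating, and substituting into the Theorem \ref{mthmNev} bound gives
\[
\sum_{i=1}^q\epsilon_{D_i}(A)\,m_{f,D_i}(r)\leq_{\operatorname{exc}} \left(\tfrac{3}{2}(2m-n+1)+\epsilon'\right)T_{f,A}(r),
\]
as required. I expect the main obstacle to be the combinatorial covering step: producing an admissible-$J$ distribution with the claimed uniform lower bound on selection probabilities is exactly where the $\tfrac{3}{2}$ factor appears and where the classical sharp Nochka constant for hyperplanes degrades under the weaker hypothesis of being an arbitrary effective Cartier divisor with Bezout intersections. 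Once that lemma is in hand---importable directly from the proof of Theorem \ref{nochka_type_theorem} via the standard Nevanlinna/Diophantine dictionary between $\lambda_{D,v}$ and $\lambda_D\circ f$---the Nevanlinna statement follows with no further substantive change.
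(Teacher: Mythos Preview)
Your proposed combinatorial lemma is false, and this is a genuine gap rather than a missing detail. Take $n\geq 4$ and let $D_1,\ldots,D_{m-n+1}$ all have the same irreducible support $W_0$ of codimension $1$. This is in $m$-subgeneral position (for $|I|\leq m-n+1$ one has $\codim\bigcap_{i\in I}D_i=1\geq |I|+n-m$) and trivially satisfies the Bezout hypothesis. An admissible $J$ (with $\Delta=1$) can contain at most one index, since any two of the $D_i$ share $W_0$ of codimension $1$. With $a_i=1$ for all $i$ one gets $\sum_i a_i=m-n+1$ while $\max_J\sum_{j\in J}a_j=1$, so your inequality would force $(m-n+1)(n+1)\leq\tfrac{3}{2}(2m-n+1)$; for $n=4$, $m=10$ this reads $35\leq\tfrac{51}{2}$, which fails. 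Equivalently, no probability distribution on admissible $J$'s can give each $i$ mass $\geq\tfrac{2(n+1)}{3(2m-n+1)}$, because summing over $i$ through $W_0$ the total mass is at most $1$.

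The paper's route (for the arithmetic statement, transported verbatim to Nevanlinna theory) does not attempt a uniform pointwise comparison with $\max_J$. Instead it splits the sum in two. One first passes from Theorem~\ref{mthmNev} to the direct analogue of Theorem~\ref{main}: taking $J=\{1,\ldots,q\}$ and $\Delta=\max_{\emptyset\subsetneq W\subsetneq X}\alpha(W)/\codim W$ gives $\sum_i c_i\epsilon_{D_i}(A)m_{f,D_i}(r)\leq_{\operatorname{exc}}((n+1)\Delta+\epsilon)T_{f,A}(r)$. If $\alpha(W)/\codim W\leq(2m-n+1)/(n+1)$ for all nonempty $W$, this already yields the bound with the better constant $2m-n+1$. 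Otherwise one chooses $W_0$ maximizing $(n+1-\codim W)/(2m-n+1-\alpha(W))$, bounds the $\alpha(W_0)$ terms with $W_0\subset\Supp D_i$ trivially via the First Main Theorem (contributing $\alpha(W_0)$ to the constant), and applies the analogue of Theorem~\ref{main} to the remaining divisors (contributing $(n+1)\max_W(\alpha(W)-\alpha(W\cup W_0))/\codim W$). The Bezout and $m$-subgeneral hypotheses are then used exactly as in the proof of Theorem~\ref{nochka_type_theorem} to show that this sum of two contributions is at most $\tfrac{3}{2}(2m-n+1)$. The point is that the two pieces are bounded by \emph{different} mechanisms (trivial height bound versus the Schmidt--Nochka input), which is strictly more flexible than trying to dominate the full sum by a single $\max_J$ term.
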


In small dimensions we have the better result:

\begin{theorem}
\label{nochka_type_theorem2_complex}
Let $n\leq 3$ be a positive integer.  Let $D_1,\ldots, D_q$ be effective divisors on $\mathbb{P}^n$ in $m$-subgeneral position of degrees $d_1,\ldots, d_q$.  Let $f:\mathbb{C}\to \mathbb{P}^n$ be a holomorphic map with Zariski dense image, and let $\epsilon>0$. Then 
\begin{align*}
\sum_{i=1}^q \frac{1}{d_i} m_{f,D_{i}}(r)\leq_{\operatorname{exc}}(2m-n+1+\epsilon)T_f(r).
\end{align*}
\end{theorem}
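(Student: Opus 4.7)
The plan is to mirror the Diophantine proof of Theorem \ref{nochka_type_theorem2}, using the Nevanlinna analogue Theorem \ref{mthmNev} in place of Theorem \ref{main}. The key observation is that the combinatorial and algebro-geometric input---the configuration of the divisors $D_1,\ldots,D_q$ in $m$-subgeneral position on $\mathbb{P}^n$ and the codimension bounds this imposes---is purely geometric and does not depend on whether one works in Diophantine approximation or Nevanlinna theory. Thus, once the geometric input is extracted from the Diophantine argument, it plugs directly into Theorem \ref{mthmNev} via the standard dictionary relating heights, local heights, and sums over places of $S$ with their Nevanlinna counterparts $T_f$, $\lambda_D\circ f$, and integrals over circles.

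The core of the argument is a suitable Nochka-type weighting. One chooses nonnegative weights $c_1,\ldots,c_q$ so that, for every nonempty closed subvariety $W\subset\mathbb{P}^n$,
\[
\sum_{\substack{i\\ W\subset \Supp D_i}}c_i \;\leq\; \Delta\,\codim W, \qquad \Delta=\frac{2m-n+1}{n+1},
\]
while the weighted sum $\sum_i (c_i/d_i)\,\lambda_{D_i}$ still controls $\sum_i (1/d_i)\,\lambda_{D_i}$ up to an error absorbable into $\epsilon$. Applying Theorem \ref{mthmNev} with $Y_i=D_i$, these weights $c_i$, and $A$ a hyperplane (so that $\epsilon_{D_i}(A)=1/d_i$ and $T_{f,A}=T_f$) produces on the right-hand side the constant $\Delta(n+1)+\epsilon=2m-n+1+\epsilon$. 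By the defining property of the $c_i$, the index set $J=\{1,\ldots,q\}$ satisfies the codimension condition in Theorem \ref{mthmNev}, so the pointwise maximum there dominates (or is realized by) the full sum and, after integration over $\theta$ and unweighting, delivers the desired bound for $\sum_i (1/d_i)\,m_{f,D_i}(r)$.

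The main obstacle is the construction of the weights $c_i$ achieving the codimension condition with the sharp constant $\Delta=(2m-n+1)/(n+1)$; this is precisely the geometric content underlying the Diophantine Theorem \ref{nochka_type_theorem2}. For $n\leq 3$ it rests on Bezout on $\mathbb{P}^n$ together with a case-by-case analysis of how many hypersurfaces in $m$-subgeneral position can share a common component of each possible dimension---exactly the input packaged in the refined Theorems \ref{nochka23irred} and \ref{theorempic1}. Once that geometric lemma is in hand, the passage to the Nevanlinna setting is formal: Theorem \ref{mthmNev} plays the role of Theorem \ref{main}, the Zariski-density of the image of $f$ substitutes for working outside a proper Zariski-closed exceptional set, and the $\leq_{\operatorname{exc}}$ bound replaces the validity ``for all but finitely many $P$'' in the arithmetic statement.
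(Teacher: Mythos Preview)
Your high-level plan---transfer the argument for Theorem~\ref{nochka_type_theorem2} to Nevanlinna theory by replacing Theorem~\ref{main} with Theorem~\ref{mthmNev}---is exactly the paper's approach, and the remark that the geometric/combinatorial input is the same in both settings is correct.

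However, your description of \emph{how} the weighting argument actually works is wrong, and the error is not cosmetic. You claim one can choose weights $c_i$ so that the codimension inequality holds with $\Delta=(2m-n+1)/(n+1)$ while $\sum_i(c_i/d_i)\lambda_{D_i}$ still controls $\sum_i(1/d_i)\lambda_{D_i}$ ``up to an error absorbable into $\epsilon$.'' Such weights do not exist in general: the latter condition forces $c_i\geq 1-\epsilon'$ for all $i$, and then for a codimension-$1$ subvariety $W$ contained in $\alpha(W)$ of the $D_i$ one gets $\sum_{W\subset D_i}c_i\geq(1-\epsilon')\alpha(W)$, which can be as large as $(1-\epsilon')(m-n+1)$. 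Already for $n=2$, $m>2$ this exceeds $\Delta=(2m-1)/3$. So a single application of Theorem~\ref{mthmNev} with $J=\{1,\ldots,q\}$ and $\Delta=(2m-n+1)/(n+1)$ cannot give the result.

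What the paper actually does (Theorems~\ref{generalnochka} and \ref{nochka23irred}, then \ref{theorempic1}) is to construct Nochka-type weights $\omega_i$ with $\sum_{W\subset D_i}\omega_i\leq\codim W$ (i.e.\ $\Delta=1$, not $(2m-n+1)/(n+1)$), set $\tau=\max_i\omega_i$, and \emph{split}
\[
\sum_i\frac{1}{d_i}m_{D_i}=\sum_i\Bigl(1-\frac{\omega_i}{\tau}\Bigr)\frac{1}{d_i}m_{D_i}+\frac{1}{\tau}\sum_i\frac{\omega_i}{d_i}m_{D_i}.
\]
The second sum is handled by the main inequality (with $\Delta=1$), contributing $(n+1)/\tau$; the first sum is handled by the \emph{trivial} bound $\epsilon_{D_i}(A)m_{D_i}\leq h_A+O(1)$ (Lemma~\ref{lemma2}), contributing $\sum_i(1-\omega_i/\tau)$. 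The specific $\omega_i$ in Theorem~\ref{nochka23irred} are engineered so that these two contributions add up to exactly $2m-n+1$. In the Nevanlinna transfer, the trivial bound is replaced by the First Main Theorem inequality $\frac{1}{d_i}m_{f,D_i}(r)\leq T_f(r)+O(1)$, and Theorem~\ref{mthmNev} (with $\Delta=1$) replaces Theorem~\ref{main}. You should restate your proof to reflect this splitting; without it, the argument does not go through.
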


The proof of Theorem \ref{main} is based on three main ingredients: a lower bound for Seshadri constants of intersections as described in \cite[Section 5.4]{PAGI}, an inequality that may be viewed as a generalization of Chebyshev's inequality (Lemma \ref{elemineq}), together with an application of Theorem \ref{AJM_mthm} to certain auxiliary closed subschemes obtained as intersections.  The applicability of our generalized Chebyshev inequality, for which we provide a complete proof due to the lack of a reference, stems from the fact that our definition of $m$-subgeneral position of closed subschemes, even when specialized to the case of general position, allows a closed subscheme $Y$ of codimension $r$ to be repeated $r$ times while general position is maintained. Due to this, it is not clear how to extend our results to similar inequalities involving beta constants in place of Seshadri constants, as the former inequalities use a somewhat more restrictive notion of general position.\par 

The structure of this paper is as follows. In Section \ref{sec_def_prelim}, we recall basic definitions and give a standard lemma with a short proof based on our general philosophy. Subsequently, we prove Theorem \ref{main} in Section \ref{section_main_thm_proof}. In Section \ref{quang_section}, we discuss the relation of Theorem \ref{main} with results of Quang and others on Diophantine approximation to divisors in subgeneral position. Finally, in Sections \ref{nochka_section} and \ref{Nochkalow} we prove the inequalities of Nochka-type.

\section{Definitions and preliminaries}\label{sec_def_prelim}
We begin with a brief account of the key properties of local and global height functions, following mostly the reference \cite{silverman_87}. We will then introduce the notion of subgeneral position used in this paper. Finally, we revisit a standard lemma (Lemma \ref{lemma}) by giving a very simple proof of it based on the general philosophy developed in our work.
\subsection{Local height functions}\label{lhf}
Let $Y$ be a closed subscheme of a projective variety $X$, both defined over a number field $k$.  For any place $v$ of $k$, one can associate a {\it local height function} (or {\it Weil function}) $\lambda_{Y,v}:X(k)\setminus Y\to \mathbb{R}$, well-defined up to $O(1)$, which gives a measure of the $v$-adic distance of a point to $Y$, being large when the point is close to $Y$. If $Y=D$ is an effective (Cartier) divisor (which we will frequently identify with the associated closed subscheme), these height functions agree with the usual height functions associated to divisors.  Local height functions satisfy the following properties: if $Y$ and $Z$ are two closed subschemes of $X$, defined over $k$, and $v$ is a place of $k$, then up to $O(1)$,
\begin{align*}
\lambda_{Y\cap Z,v}&=\min\{ \lambda_{Y,v},\lambda_{Z,v}\},\\
\lambda_{Y+Z,v}&=\lambda_{Y,v}+\lambda_{Z,v},\\
\lambda_{Y,v}&\leq \lambda_{Z,v}, \text{ if }Y\subset Z.
\end{align*}
In particular, $\lambda_{Y,v}$ is bounded from below for all $P\in X(k)\setminus Y$.
  If $\phi:W\to X$ is a morphism of projective varieties with $\phi(W) \not \subset Y$, then up to $O(1)$,
\begin{equation*}
\lambda_{Y,v}(\phi(P))=\lambda_{\phi^*Y,v}(P), \quad \forall P\in W(k)\setminus \phi^*Y.
\end{equation*}
Here, $Y\cap Z$, $Y+Z$, $Y\subset Z$, and $\phi^*Y$ are defined in terms of the associated ideal sheaves.  In particular, we emphasize that if $Y$ corresponds to the ideal sheaf $\II_Y$, then $\phi^*Y$ is the closed subscheme corresponding to the inverse image ideal sheaf $\phi^{-1}\II_Y\cdot \mathcal{O}_W$.  \par
For our purposes, it is important to note that one can set $Y=Z$ and obtain
$$\lambda_{2Y,v}= \lambda_{Y+Y,v}=\lambda_{Y,v}+\lambda_{Y,v}=2\lambda_{Y,v},$$
where $2Y$ is the closed subscheme corresponding to the ideal sheaf $\II_Y^2$. Moreover, it holds that if $Y_1,\ldots,Y_m$ are closed subschemes of $X$ and $c_1,\ldots,c_m$ are positive integers, then
\begin{align}
\lambda_{c_1Y_1\cap\ldots\cap c_mY_m,v}&=\min\{ c_1\lambda_{Y_1,v},\ldots,c_m\lambda_{Y_m,v}\},\label{cap}\\
\lambda_{c_1Y_1+\cdots+c_mY_m,v}&=c_1\lambda_{Y_1,v}+\cdots+c_m\lambda_{Y_m,v}.\label{plus}
\end{align}
\subsection{Global height functions} If $Y$ is a closed subscheme of a projective variety $X$, both defined over a number field $k$, then a {\it global height function} $h_Y:X(k)\setminus Y\to \mathbb{R}$, can be associated as follows. Letting $M_k$ denote the set of all places of $k$, and set 
$$h_Y(P) = \sum_{v\in M_k} \lambda_{Y,v} (P).$$
A subtle point here is that the definition of the $\lambda_{Y,v}$, $v\in M_k$, can be made so that $(\lambda_{Y,v})_{v\in M_k}$ is well-defined up to a so-called $M_k$-constant, which is a collection of real numbers  $(c_v)_{v\in M_K}$ such that $c_v = 0$ for all but finitely many $v$.  Therefore, $h_Y$ is well-defined up to $O(1)$. \par
For a given finite set $S$ of places of $k$, it is customary to split the above sum into the {\it proximity function}
$$m_{Y,S}(P) =\sum_{v\in S} \lambda_{Y,v} (P)$$
and the {\it counting function}
$$N_{Y,S}(P) =\sum_{v\in M_k \setminus S} \lambda_{Y,v} (P).$$

Global height functions satisfy properties analogous to those stated above for local height functions, except the first property above, which becomes $h_{Y\cap Z}\leq\min\{ h_{Y},h_{Z}\}+O(1)$. When not addressed explicitly and even when any mention of $M_k$-constants or $O(1)$ is omitted for brevity's sake, all equalities and inequalities involving height functions are to be interpreted with their mild ill-definedness in mind, which is standard procedure.\par
A special role is played by global height functions of Cartier divisors, in particular, of ample Cartier divisors. Namely, if $A$ is an ample Cartier divisor, the global height function $h_A$ measures the arithmetic complexity of the point $P$, explaining the presence of $h_A$ on the right hand side of the inequalities in Theorems \ref{ajm_main} and \ref{main}. Furthermore, for an arbitrary closed subscheme $Y$, the height function $h_A$ dominates $h_Y$ in the sense that the domain of definition of $h_A$ can be extended to all of $X(k)$ and there exists a constant $c$ such that 
\begin{align} 
h_Y(P)\leq c h_A(P) \label{ample_domination}
\end{align}
for all $P\in X(k)\setminus Y$. For a proof of this statement, see \cite[Proposition 4.2]{silverman_87} (or Lemmas \ref{lemma} and \ref{lemma2} for refined statements). As explained in this reference, the intuition for this result is that if $P$ is $v$-adically close to Y (but not on $Y$), then $P$ must be $v$-adically complicated in terms of its coordinates after a projective embedding by a very ample multiple of the divisor $A$. \par
A further property of the local height functions $(\lambda_{Y,v})_{v\in M_k}$, is that they are in fact bounded from below by an $M_k$-constant, which immediately yields that the inequality \eqref{ample_domination} can be stated for any individual $\lambda_{Y,v}$, i.e., 
\begin{align} 
\lambda_{Y,v}(P)\leq c h_A(P). \label{lhf_ample_domination}
\end{align}

\subsection{Subgeneral position and Seshadri constants}\label{ex_int_prelim}
We recall the definition of being in $m$-subgeneral position and the definition of Seshadri constants, both in the context of closed subschemes.
\begin{definition}\label{gen_pos_def}
If $X$ is a projective variety of dimension $n$, we say that closed subschemes $Y_1,\ldots, Y_q$ of $X$ are in {\it $m$-subgeneral position} if for every subset $I\subset\{1,\ldots, q\}$ with $|I|\leq m+1$ we have $\codim \cap_{i\in I} Y_i\geq |I|+n-m$, where we use the convention that $\dim \emptyset=-1$. In the case $m=n$, we say that the closed subschemes are in {\it general position}.\par
\end{definition}
It should be noted that even when $m=n$, a closed subscheme of codimension $r$ may be repeated $r$ times while maintaining general position according to our definition. This phenomenon will be important in the application of the generalized Chebyshev inequality in Section 5.

We use a weighted notion of $m$-subgeneral position in Section \ref{Nochkalow}.

We now recall the notion of a {\it Seshadri constant} for a closed subscheme relative to a nef Cartier divisor (see \cite[Section 2]{HL21} for further details) . 

\begin{definition}\label{Sesh}
Let $Y$ be a closed subscheme of a projective variety $X$ and let $\pi:\tilde{X}\to X$ be the blowing-up of $X$ along $Y$. Let $A$ be a nef Cartier divisor on $X$. We define the {\it Seshadri constant} $\epsilon_Y(A)$ of $Y$ with respect to $A$ to be the real number
\begin{align*}
\epsilon_Y(A)=\sup\{\gamma\in {\mathbb{Q}}^{\geq 0}\mid \pi^*A-\gamma E\text{ is $\mathbb{Q}$-nef}\},
\end{align*}
where $E$ is an effective Cartier divisor on $\tilde X$ whose associated invertible sheaf is the dual of $\pi^{-1}\II_Y \cdot \O_{\tilde X}$.
\end{definition}

\subsection{A standard lemma}
We will use the following lemma (well-known in the numerically equivalent case) and take this opportunity to provide a very short proof of the lemma  based on the philosophy expressed in our earlier work \cite{HL21}.
\begin{lemma}\label{lemma}
Let $X$ be a projective variety of dimension $n$ defined over a number field $k$.  Let $A,B$ be effective Cartier divisors defined over $k$, with $A$ ample. Let $\epsilon_B(A)$ be the associated Seshadri constant. For all $\epsilon >0$, there is a constant $c_\epsilon$ such that
\begin{align*}
\epsilon_B(A)h_B \leq (1+\epsilon) h_A+c_\epsilon. 
\end{align*}
In particular, if $A$ and $B$ are numerically equivalent ample Cartier divisors, then for all $\epsilon >0$, there is a constant $c_\epsilon$ such that
$$(1-\epsilon)h_A -c_\epsilon \leq h_B \leq (1+\epsilon) h_A+c_\epsilon.$$ 
\end{lemma}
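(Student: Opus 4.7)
Since $B$ is an effective Cartier divisor, its ideal sheaf is invertible and so the blow-up $\pi: \tilde X \to X$ along $B$ is an isomorphism with exceptional divisor $E = B$. Consequently Definition \ref{Sesh} reduces to
\[
\epsilon_B(A) = \sup\{\gamma \in \mathbb{Q}^{\geq 0} : A - \gamma B \text{ is $\mathbb{Q}$-nef}\},
\]
and $h_B$ plays the role of $h_E$. The core observation driving the proof is: whenever $\gamma \in \mathbb{Q}^{\geq 0}$ makes $A - \gamma B$ $\mathbb{Q}$-nef, for every rational $\epsilon' > 0$ the $\mathbb{Q}$-Cartier divisor $(1+\epsilon')A - \gamma B = (A - \gamma B) + \epsilon' A$ is a sum of a nef and an ample class, hence $\mathbb{Q}$-ample. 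A sufficiently divisible positive multiple $m((1+\epsilon')A - \gamma B)$ is then very ample Cartier, and because heights of very ample divisors are bounded below on $X(k)$ up to $O(1)$ (via the projective-embedding interpretation, where the naive height on $\mathbb{P}^N$ is nonnegative), expanding $h_{m((1+\epsilon')A - \gamma B)} = m(1+\epsilon')h_A - m\gamma h_B + O(1)$ and dividing by $m$ gives
\[
\gamma h_B(P) \leq (1+\epsilon') h_A(P) + c_{\gamma,\epsilon'}
\]
for all $P \in X(k) \setminus B$.

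Given $\epsilon > 0$, I would then pick rational $\gamma$ with $A - \gamma B$ $\mathbb{Q}$-nef and $\epsilon_B(A) - \gamma$ as small as desired, and combine with the ample-domination estimate $h_B \leq C h_A + O(1)$ from inequality \eqref{ample_domination} to write
\[
\epsilon_B(A) h_B = \gamma h_B + (\epsilon_B(A) - \gamma) h_B \leq (1+\epsilon')h_A + (\epsilon_B(A) - \gamma) C h_A + O(1).
\]
Choosing $\epsilon'$ and $\epsilon_B(A) - \gamma$ small enough that $\epsilon' + (\epsilon_B(A) - \gamma) C < \epsilon$ yields the first claimed inequality.

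For the \emph{in particular} statement, if $A$ and $B$ are numerically equivalent ample Cartier divisors then $A - B$ is numerically trivial and hence nef, so $\epsilon_B(A) \geq 1$ and symmetrically $\epsilon_A(B) \geq 1$. Applying the first inequality in each direction gives $h_B \leq (1+\epsilon) h_A + c_\epsilon$ and $h_A \leq (1+\epsilon) h_B + c_\epsilon$, and the latter rearranges to $(1-\epsilon) h_A - c_\epsilon \leq h_B$ after a standard renaming of $\epsilon$. The only non-formal step is the passage from $\mathbb{Q}$-nefness to a height inequality, handled by the nef-plus-ample-is-ample perturbation above; everything else is bookkeeping with the ample-domination bound.
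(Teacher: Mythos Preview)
Your proof is correct and follows essentially the same approach as the paper: both arguments hinge on the fact that perturbing the nef $\mathbb{Q}$-divisor $A - \epsilon_B(A)B$ by a small positive multiple of the ample divisor $A$ yields an ample $\mathbb{Q}$-divisor, whose height is then bounded below. The paper organizes this slightly more directly by restricting to $\epsilon$ with $\frac{1+\epsilon}{\epsilon_B(A)}$ rational and working with the single ample $\mathbb{Q}$-divisor $\frac{1+\epsilon}{\epsilon_B(A)}A - B$, thereby avoiding your auxiliary appeal to the domination bound \eqref{ample_domination}; but the content is the same.
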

\begin{proof}
Without loss of generality, we can restrict ourselves to $\epsilon$ such that $\frac{1+\epsilon}{\epsilon_B(A)}$ is rational.  Then $\frac{1+\epsilon}{\epsilon_B(A)}A-B$ is an ample $\QQ$-divisor. Let $N$ be a sufficiently divisible integer such that $N\left(\frac{1+\epsilon}{\epsilon_B(A)}A-B\right)$ is an ample integral divisor. By the boundedness of heights from below, we can infer
$$h_{N\left(\frac{1+\epsilon}{\epsilon_B(A)}A-B\right)} \geq -c'_\epsilon$$
and thus
$$N h_B \leq N\frac{1+\epsilon}{\epsilon_B(A)}h_A+c'_\epsilon,$$
which yields the inequality. In particular, if $A$ and $B$ are numerically equivalent ample divisors then $\epsilon_B(A)=\epsilon_A(B)=1$, and the result follows by applying the inequality twice, once with the roles of $A$ and $B$ switched.
\end{proof}

More generally, one easily extends this result to closed subschemes:
\begin{lemma}\label{lemma2}
Let $X$ be a projective variety of dimension $n$ defined over a number field $k$.  Let $A$ be an ample divisor on $X$ and $Y$ a closed subscheme of $X$, both defined over $k$. Let $\epsilon_Y(A)$ be the associated Seshadri constant. For all $\epsilon >0$, there is a constant $c_\epsilon$ such that
\begin{align*}
\epsilon_Y(A)h_Y(P) \leq (1+\epsilon) h_A(P)+c_\epsilon
\end{align*}
for all $P\in X(k)\setminus \Supp Y$.
\end{lemma}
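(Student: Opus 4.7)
The plan is to lift the argument of Lemma \ref{lemma} from Cartier divisors on $X$ to the blow-up $\pi \colon \tilde X \to X$ of $X$ along $Y$, using the fact that on $\tilde X$ the role of $Y$ is played by the exceptional Cartier divisor $E$ from Definition \ref{Sesh}. Since $\pi^{*}Y = E$ as closed subschemes and $\pi$ is an isomorphism away from $\Supp Y$, the functoriality of local and hence global heights gives
\[h_A(P) = h_{\pi^{*}A}(\tilde P) + O(1), \qquad h_Y(P) = h_E(\tilde P) + O(1)\]
for every $P \in X(k) \setminus \Supp Y$, where $\tilde P = \pi^{-1}(P)$. If $\epsilon_Y(A) = 0$, the inequality reduces to the boundedness from below of $h_A$, so I may assume $\epsilon_Y(A) > 0$.

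The next step is to fix an integer $N$ large enough that $H := N\pi^{*}A - E$ is ample on $\tilde X$; this is a standard consequence of $A$ being ample on $X$ together with the construction of the blow-up. Given $\epsilon > 0$, I choose a rational $\gamma$ with $\epsilon_Y(A)/(1+\epsilon/2) < \gamma < \epsilon_Y(A)$. By the defining property of the Seshadri constant, $\pi^{*}A - \gamma E$ is $\mathbb{Q}$-nef on $\tilde X$, and so for any small rational $\delta > 0$,
\[(\pi^{*}A - \gamma E) + \delta H = (1 + \delta N)\pi^{*}A - (\gamma + \delta)E\]
is the sum of a $\mathbb{Q}$-nef and an ample $\mathbb{Q}$-divisor, hence ample.

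Clearing denominators and invoking the boundedness from below of the height of an ample Cartier divisor, one obtains
\[(1 + \delta N)\, h_{\pi^{*}A}(\tilde P) - (\gamma + \delta)\, h_E(\tilde P) \geq -c\]
for some constant $c$. Rearranging and translating back to $X$ via the identities in the first paragraph yields $h_A(P) \geq \frac{\gamma + \delta}{1 + \delta N}\, h_Y(P) - c'$. For $\delta$ sufficiently small, the coefficient of $h_Y(P)$ exceeds $\epsilon_Y(A)/(1 + \epsilon)$; a brief case split on the sign of $h_Y(P)$ (absorbing the discrepancy into the constant via $h_A \geq -O(1)$ when $h_Y(P) < 0$) then produces the desired inequality after multiplying through by $1 + \epsilon$.

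The main obstacle is the ampleness of $N\pi^{*}A - E$ on $\tilde X$ for $N$ large, which is what allows one to perturb $\pi^{*}A - \gamma E$ into an ample divisor while expressing the perturbation purely in terms of $\pi^{*}A$ and $E$; an arbitrary ample divisor on $\tilde X$ would be difficult to convert into a useful height bound back on $X$. Once this input is in hand, the remainder is a direct transplant of the proof of Lemma \ref{lemma} from $X$ to the blow-up.
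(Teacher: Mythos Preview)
Your proof is correct and follows essentially the same approach as the paper: both pass to the blow-up $\pi\colon\tilde X\to X$ along $Y$, use that $\pi^*A-\delta E$ (equivalently $N\pi^*A-E$) is ample for small $\delta$ (citing \cite[Ex.~II.7.14(b)]{Hartshorne}), and then transplant the argument of Lemma~\ref{lemma} to $\tilde X$ via functoriality of heights. You have spelled out in detail what the paper summarizes as ``essentially the same proof as in Lemma~\ref{lemma}.''
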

\begin{proof}
Let $\pi:\tilde{X}\to X$ be the blowing-up of $X$ along $Y$, and let $E$ be the associated exceptional divisor as in Definition \ref{Sesh}. Then from the definitions, $\epsilon_Y(A)=\epsilon_E(\pi^*A)$. While $\pi^*A$ is not necessarily ample, it is known \cite[Ex.~II.7.14(b)]{Hartshorne} that $\pi^*A-\delta E$ is an ample $\mathbb{Q}$-divisor for all sufficiently small (rational) $\delta$. Then using functoriality of heights, essentially the same proof as in Lemma \ref{lemma} (applied to $\pi^*A$ and $E$) yields the inequality.   
\end{proof}

\section{Proof of Theorem \ref{main}}\label{section_main_thm_proof}

For the proof of Theorem \ref{main}, we need the following inequality, which is a generalization of Chebyshev's inequality (see also work of Jensen \cite[p.~245]{MPF}). For lack of a reference, we provide a detailed proof. Due to the appearance of the denominator on the right hand side of the inequality, some care has to be taken in the statement and proof to avoid division by zero, and we therefore break our arguments up into several cases.
\begin{lemma}
\label{elemineq}
Let $a_1\geq a_2\geq\cdots \geq a_n\geq 0$ and $b_1,\ldots, b_n,c_1,\ldots, c_n$ be nonnegative real numbers. Assume that that there exists at least one index $i$ such that $c_{i}\not = 0$, and let $i_0$ be the smallest such index. Then
\begin{align}
\label{generalChebyshev}
\sum_{i=1}^na_ib_i\geq \left(\min_{i_0\leq j\leq n} \frac{\sum_{i=1}^jb_i}{\sum_{i=1}^jc_i}\right)\sum_{i=1}^na_ic_i.
\end{align}
\end{lemma}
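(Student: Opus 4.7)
The plan is to reduce the inequality to the statement $\sum_{i=1}^n a_i(b_i - \mu c_i) \geq 0$, where $\mu := \min_{i_0 \leq j \leq n} \frac{\sum_{i=1}^j b_i}{\sum_{i=1}^j c_i}$, and then verify this via a summation-by-parts (Abel summation) argument that exploits the monotonicity of the $a_i$.

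First I would introduce the partial sums $B_j = \sum_{i=1}^j b_i$ and $C_j = \sum_{i=1}^j c_i$, with the convention $B_0 = C_0 = 0$. The key preliminary observation is that $B_j \geq \mu C_j$ holds for \emph{every} $0 \leq j \leq n$. For $j \geq i_0$ this is immediate from the definition of $\mu$ together with the fact that $C_j \geq C_{i_0} = c_{i_0} > 0$, so the denominators in the minimum are strictly positive and the resulting inequality is honest. For $j < i_0$, by the choice of $i_0$ we have $c_1 = \cdots = c_j = 0$, so $C_j = 0$, while $B_j \geq 0$, and the inequality reduces to $B_j \geq 0$. This case distinction is the only subtle bookkeeping needed to avoid the $0/0$ ambiguity.

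Next I would compute, using $b_i - \mu c_i = (B_i - \mu C_i) - (B_{i-1} - \mu C_{i-1})$ and Abel summation,
\begin{align*}
\sum_{i=1}^n a_i(b_i - \mu c_i) = a_n(B_n - \mu C_n) + \sum_{i=1}^{n-1}(a_i - a_{i+1})(B_i - \mu C_i).
\end{align*}
By the preliminary observation, each factor $B_i - \mu C_i$ is nonnegative; by hypothesis $a_n \geq 0$ and $a_i - a_{i+1} \geq 0$ for $1 \leq i \leq n-1$. Hence the right-hand side is a sum of nonnegative terms, which yields $\sum_{i=1}^n a_i b_i \geq \mu \sum_{i=1}^n a_i c_i$, as desired.

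There is no serious obstacle here: the argument is essentially a weighted version of the standard proof of Chebyshev's sum inequality. The only point that requires care is the handling of the indices $j < i_0$ in the definition of $\mu$, which is resolved by the trivial case-split above; the monotonicity of the $a_i$ does all the actual work through Abel summation.
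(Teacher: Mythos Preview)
Your proof is correct and is in fact considerably more streamlined than the paper's. The paper proceeds by induction on $n$ together with a multi-case analysis: it fixes an index $j_0$ where the minimum is attained, distinguishes whether $j_0<n$ or $j_0=n$, and in the former case further splits according to whether the tails of the $b_i$ or $c_i$ vanish, applying the inductive hypothesis once or twice. Only in the final case $j_0=n$ does the paper carry out (in an expanded, step-by-step form) precisely the telescoping computation you perform via Abel summation. Your key observation---that $B_j\geq \mu C_j$ holds for \emph{every} $j$, not merely when the minimum is attained at $j_0=n$---renders the induction and the case analysis unnecessary and yields the result in one stroke. What the paper's longer argument buys is perhaps a more transparent connection to the classical Chebyshev and Jensen inequalities mentioned in the subsequent remark, but your approach is both shorter and conceptually cleaner.
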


\begin{remark}
If additionally $b_1\geq b_2\geq\cdots \geq b_n\geq 0$ and $c_1=\cdots=c_n=1$, then the minimum on the right-hand side of \eqref{generalChebyshev} occurs at $j=n$ and one obtains Chebyshev's inequality:
\begin{align*}
\sum_{i=1}^na_ib_i\geq \frac{1}{n}\sum_{i=1}^na_i\sum_{i=1}^nb_n.
\end{align*}
Similarly, if one assumes the $c_i$ are all positive and $\frac{b_1}{c_1}\geq \cdots \geq\frac{b_n}{c_n}$, then the minimum on the right-hand side of \eqref{generalChebyshev} again occurs at $j=n$, and one obtains an inequality of Jensen \cite[p.~245]{MPF}.
\end{remark}

\begin{proof}
We prove the inequality by induction on $n$. If $n=1$, the inequality is trivial. Let $n>1$. We may assume that $b_1\neq 0$ (if $b_1=0$ and $c_1\neq 0$ then the minimum in the inequality occurs at $j=1$ and the inequality is trivial; if $b_1=c_1=0$ the result follows from induction). Let $j_0\geq i_0$ be an index such that $\frac{\sum_{i=1}^jb_i}{\sum_{i=1}^jc_i}$ is minimized at $j=j_0$. \par
Case I. We start with the case $j_0<n$. Then for $j>j_0$, we have due to the minimality of $j_0$:
\begin{align*}
0\geq \frac{\sum_{i=1}^{j_0}b_i}{\sum_{i=1}^{j_0}c_i}-\frac{\sum_{i=1}^{j}b_i}{\sum_{i=1}^{j}c_i}&=\frac{\sum_{i=1}^{j}c_i\sum_{i=1}^{j_0}b_i-\sum_{i=1}^{j_0}c_i\sum_{i=1}^{j}b_i}{\sum_{i=1}^{j_0}c_i\sum_{i=1}^{j}c_i}\\
&=\frac{\sum_{i=j_0+1}^{j}c_i\sum_{i=1}^{j_0}b_i-\sum_{i=1}^{j_0}c_i\sum_{i=j_0+1}^{j}b_i}{\sum_{i=1}^{j_0}c_i\sum_{i=1}^{j}c_i}.
\end{align*}
Considering the numerator, we find that for all $j>j_0$,
\begin{align}\label{numerator_neg}
\sum_{i=j_0+1}^{j}c_i\sum_{i=1}^{j_0}b_i-\sum_{i=1}^{j_0}c_i\sum_{i=j_0+1}^{j}b_i \leq 0.
\end{align}

We now write 
\begin{align*}
\sum_{i=1}^na_ib_i&=\sum_{i=1}^{j_0}a_ib_i+\sum_{i=j_0+1}^na_ib_i
\end{align*}
and further distinguish cases.\par
Case I.a. Suppose that $b_i=0$ for all $i > j_0$ or $c_i=0$ for all $i>j_0$.  If $b_i=0$ for all $i > j_0$, then setting $j=n$ in \eqref{numerator_neg}, we obtain
\begin{align*}
\sum_{i=j_0+1}^{n}c_i\sum_{i=1}^{j_0}b_i- 0 \leq 0,
\end{align*}
which implies that $c_{j_0+1}=\ldots=c_n =0$. Thus in any case we may assume that $c_i=0$ for all $i>j_0$. By induction and the fact that the minimum clearly remains unchanged after replacing $j_0$ by $n$, we can infer that
\begin{align*}
\sum_{i=1}^na_ib_i&\geq \sum_{i=1}^{j_0}a_ib_i \geq \left(\min_{i_0\leq j\leq j_0} \frac{\sum_{i=1}^jb_i}{\sum_{i=1}^jc_i}\right)\sum_{i=1}^{j_0}a_ic_i=\left(\min_{i_0\leq j\leq n} \frac{\sum_{i=1}^jb_i}{\sum_{i=1}^jc_i}\right)\sum_{i=1}^{n}a_ic_i,
\end{align*}
proving the lemma in this case.\par
Case I.b. There exists $i,i' > j_0$ such that $b_i \not = 0$ and $c_{i'}\neq 0$. Let $k_0$ be the smallest index $i>j_0$ with $b_i \not = 0$. By the minimality of $j_0$ it is clear that $c_{j_0+1}=\ldots=c_{k_0-1}=0$. Let $i_1$ be the smallest index $i$ with $k_0\leq i \leq n$ such that $c_i\not = 0$.  Note that
\begin{align}\label{break_up}
\sum_{i=1}^na_ib_i&=\sum_{i=1}^{j_0}a_ib_i+\sum_{i=j_0+1}^na_ib_i =\sum_{i=1}^{j_0}a_ib_i+\sum_{i=k_0}^{n}a_ib_i.
\end{align}
Applying induction twice to \eqref{break_up} yields

\begin{align}\label{induction_twice}
\sum_{i=1}^na_ib_i& \geq  \left(\min_{i_0\leq j\leq j_0} \frac{\sum_{i=1}^jb_i}{\sum_{i=1}^jc_i}\right)\sum_{i=1}^{j_0}a_ic_i + \left(\min_{i_1\leq j\leq n} \frac{\sum_{i=k_0}^jb_i}{\sum_{i=k_0}^jc_i}\right)\sum_{i=k_0}^{n}a_ic_i. 
\end{align}

For $j \geq i_1$, all the sums appearing in \eqref{numerator_neg} are positive and we can divide to obtain for all $j \geq i_1$:
\begin{align*}
\frac{\sum_{i=1}^{j_0}b_i}{\sum_{i=1}^{j_0}c_i}\leq \frac{\sum_{i=j_0+1}^{j}b_i}{\sum_{i=j_0+1}^{j}c_i}.
\end{align*}
Since $b_{j_0+1}=\ldots=b_{k_0-1}=0=c_{j_0+1}=\ldots=c_{k_0-1}$, we can extend the above inequality to obtain for all $j \geq i_1$: 
\begin{align*}
\frac{\sum_{i=1}^{j_0}b_i}{\sum_{i=1}^{j_0}c_i}\leq \frac{\sum_{i=j_0+1}^{j}b_i}{\sum_{i=j_0+1}^{j}c_i}= \frac{\sum_{i=k_0}^{j}b_i}{\sum_{i=k_0}^{j}c_i}.
\end{align*}

Together with \eqref{induction_twice}, this proves the lemma in this case.\par
Case II. If $j_0=n$, then we let
\begin{align*}
\alpha=\frac{\sum_{i=1}^{n}b_i}{\sum_{i=1}^{n}c_i}.
\end{align*}
It follows that for all $j\geq 1:$
\begin{align*}
\sum_{i=1}^jb_i\geq \alpha\sum_{i=1}^jc_i.
\end{align*}
Using that $a_1\geq a_2\geq \cdots \geq a_n\geq 0$, we can now estimate
\begin{align*}
\sum_{i=1}^na_ib_i & =a_1b_1+\sum_{i=2}^na_ib_i=\alpha a_1c_1+a_1(b_1-\alpha c_1)+a_2b_2+\sum_{i=3}^na_ib_i\\
&\geq \alpha a_1c_1+a_2(b_1+b_2-\alpha c_1)+\sum_{i=3}^na_ib_i\\
&\geq \alpha a_1c_1+\alpha a_2c_2+a_2(b_1+b_2-\alpha c_1-\alpha c_2)+\sum_{i=3}^na_ib_i\\
&\geq \alpha a_1c_1+\alpha a_2c_2+a_3(b_1+b_2+b_3-\alpha c_1-\alpha c_2)+\sum_{i=4}^na_ib_i\\
&\vdots\\
&\geq \alpha\sum_{i=1}^n a_ic_i +a_n\left(\sum_{i=1}^ n b_i-\alpha\sum_{i=1}^nc_i\right)\\
&\geq \alpha\sum_{i=1}^n a_ic_i, 
\end{align*}
which concludes the proof of the lemma.
\end{proof}

An immediate corollary of Lemma \ref{elemineq} is
\begin{corollary}
\label{cor_elemineq}
Let $a_1\geq a_2\geq\cdots \geq a_n\geq 0$ and $b_1,\ldots, b_n,c_1,\ldots, c_n$ be nonnegative real numbers. Assume that $b_1\not = 0$. Then
\begin{align*}
 \left(\max_{1\leq j\leq n} \frac{\sum_{i=1}^jc_i}{\sum_{i=1}^jb_i}\right) \sum_{i=1}^na_ib_i\geq \sum_{i=1}^na_ic_i.
\end{align*}
\end{corollary}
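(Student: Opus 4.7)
The plan is to deduce the inequality directly from Lemma \ref{elemineq} by exchanging the direction of the optimization via reciprocals. The hypothesis $b_1 \neq 0$, together with nonnegativity of the $b_i$, is crucial here: it guarantees $\sum_{i=1}^j b_i > 0$ for every $j \geq 1$, so the ratios appearing in the maximum on the left-hand side are well defined, and none of the subsequent manipulations run into division by zero.

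First I would dispose of the degenerate case in which every $c_i$ vanishes: the right-hand side is then $0$, the maximum on the left-hand side is $0$ as well, and the inequality reduces to $0 \geq 0$. Otherwise let $i_0$ be the smallest index with $c_{i_0} \neq 0$, matching the notation of Lemma \ref{elemineq}. Applying that lemma directly yields
\begin{align*}
\sum_{i=1}^n a_i b_i \geq \left(\min_{i_0 \leq j \leq n} \frac{\sum_{i=1}^j b_i}{\sum_{i=1}^j c_i}\right) \sum_{i=1}^n a_i c_i.
\end{align*}
For every $j \geq i_0$ both partial sums in the ratio are strictly positive, so the minimum is exactly the reciprocal of $\max_{i_0 \leq j \leq n} \frac{\sum_{i=1}^j c_i}{\sum_{i=1}^j b_i}$.

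Finally, I would observe that for $j < i_0$ the numerator $\sum_{i=1}^j c_i$ vanishes while the denominator is positive, so those indices contribute $0$ to the maximum; the maximum over $i_0 \leq j \leq n$ therefore coincides with the maximum over $1 \leq j \leq n$. Multiplying the inequality above by this (strictly positive) maximum then gives the claimed bound. I do not expect any serious obstacle — the entire argument is a bookkeeping exercise around the indices, and the only point requiring care is keeping division by zero at bay, which the hypothesis $b_1 \neq 0$ together with the preliminary case split on the $c_i$'s handles.
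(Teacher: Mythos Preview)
Your argument is correct and matches the paper's intent: the corollary is stated there as an immediate consequence of Lemma~\ref{elemineq}, and your reciprocal-of-the-minimum derivation, together with the bookkeeping on the degenerate cases, is exactly how one makes ``immediate'' precise.
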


We now conduct the proof of Theorem \ref{main}.

\begin{proof}[Proof of Theorem \ref{main} ]
Let $v\in S$. In order to handle the technical difficulty that the Seshadri constants $\epsilon_{Y_{1,v}}(A),\ldots, \epsilon_{Y_{q,v}}(A)$ may be unrelated distinct positive real numbers, we ``normalize" them in a two step process. First, for every $i=1\ldots, q$, let $\tau_{1,v},\ldots,\tau_{q,v}$ be real numbers in the interval $[0,1)$ so that 
$$(1+\tau_{1,v})\epsilon_{Y_{1,v}}(A),\ldots, (1+\tau_{q,v})\epsilon_{Y_{q,v}}(A)$$
are rational numbers. We choose $\tau_{i,v}=0$ if $\epsilon_{Y_{i,v}}(A)$ is already a rational number. For a sufficiently divisible integer $c_v$ (independent of $i$) such that 
$$ c_v (1+\tau_{1,v})\epsilon_{Y_{1,v}}(A),\ldots, c_v(1+\tau_{q,v})\epsilon_{Y_{q,v}}(A)$$
are all integers, we set
$$\tilde  Y_{i,v} := c_v (1+\tau_{i,v})\epsilon_{Y_{i,v}}(A)Y_{i,v},$$
where the multiplication of the scheme $Y_{i,v}$ with the integer $c_v (1+\tau_{i,v})\epsilon_{Y_{i,v}}(A)$ is supposed to be understood in the sense of schemes.
Observe that
\begin{equation}
\epsilon_{\tilde  Y_{i,v}}(A)=\frac 1 {c_v(1+\tau_{i,v})} \label{modif_sesh}
\end{equation}
for $i=1,\ldots,q$, and therefore
\begin{equation*}
\frac 1 {c_v} \leq 2\epsilon_{\tilde  Y_{i,v}}(A)
\end{equation*}
for $i=1,\ldots,q$. Consequently,
\begin{equation*}
\frac 1 {c_v} \leq 2 \min_{i}  \epsilon_{\tilde  Y_{i,v}}(A).
\end{equation*}

Note that
\begin{align}
&\ |\epsilon_{\tilde  Y_{i,v}}(A)-\epsilon_{\tilde  Y_{i',v}}(A)| \nonumber \\
= &\  \left|\frac 1 {c_v(1+\tau_{i,v})} - \frac 1 {c_v(1+\tau_{i',v})}\right| \nonumber \\
= &\  \frac 1 {c_v} \left|\frac 1 {(1+\tau_{i,v})} - \frac 1 {(1+\tau_{i',v})}\right| \nonumber \\
\leq  &\  \frac 1 {c_v} \left| \frac {\tau_{i',v}-\tau_{i,v}} {(1+\tau_{i',v})(1+\tau_{i,v})} \right|\nonumber \\
\leq  &\  \max_k\frac {\tau_{k,v}}  {c_v} \nonumber \\
\leq  &\   \max_k\tau_{k,v} \cdot 2  \min_{l}  \epsilon_{\tilde  Y_{l,v}}(A) \label{sesh_bound}
\end{align}
is satisfied for all $i,i'=1,\ldots,q$. 

We introduce the following notation for intersections of the normalized subschemes:
 $$\tilde Y_{I,v}=\bigcap_{i\in I}\tilde  Y_{i,v}.$$ 

Fix a point $P$ outside of the support of the given closed subschemes. Let $\{i_{1,v},\ldots, i_{q,v}\}=\{1,\ldots, q\}$ be such that (choosing, as we may, our local height functions to be nonnegative):
\begin{align}
\lambda_{\tilde Y_{i_{1,v},v},v}(P)\geq \ldots\geq \cdots \geq \lambda_{\tilde Y_{i_{q,v},v},v}(P)\geq 0. \label{tilde_chain_ineq}
\end{align}
We set $I_{j,v}=\{i_{1,v},\ldots, i_{j,v}\}$.  Let $m_v$ be the largest index such that
\begin{align*}
\cap_{j=1}^{m_v} \tilde Y_{i_{j,v},v}\neq \emptyset.
\end{align*}
Due to the chain of inequalities \eqref{tilde_chain_ineq} and property \eqref{cap} we have
\begin{align*}
\lambda_{\tilde Y_{i_{j,v}v},v}(P)=\min_{j'\leq j}\lambda_{\tilde Y_{i_{j',v},v},v}(P)=\lambda_{\tilde Y_{I_{j,v},v},v}(P).
\end{align*}
In particular, for $j>m_v$, $\lambda_{\tilde Y_{i_{j,v},v},v}(P)$ is bounded (by a constant independent of $P$) and
\begin{align*}
\sum_{v\in S}\sum_{i=1}^q c_{i,v}\epsilon_{\tilde Y_{i,v}}(A) \lambda_{\tilde Y_{i,v},v}(P)\leq \sum_{v\in S}\sum_{j=1}^{m_v}c_{i_j,v}\epsilon_{\tilde Y_{i_{j,v},v}}(A) \lambda_{\tilde Y_{I_{j,v},v},v}(P). 
\end{align*}

Let $b_{j,v}=\codim \tilde Y_{I_{j,v},v}$ and set $b_{0,v}=0$.  Let $\epsilon_v(A)=\max_i \epsilon_{\tilde Y_{i,v}}(A)$.  We can therefore apply Corollary \ref{cor_elemineq} (for each $v\in S$) with
\begin{align}
a_1&= \epsilon_v(A) \lambda_{\tilde Y_{I_{1,v},v},v}(P) \geq \cdots \geq a_{m_v}= \epsilon_v(A) \lambda_{\tilde Y_{I_{m_v,v},v},v}(P)\geq 0, \label{chain_ineq_ai}  \\
b_1&=b_{1,v}-b_{0,v},\ldots, b_{m_v}=b_{m_v,v}-b_{m_v-1,v}, \nonumber \\
c_j&=c_{i_j,v}. \nonumber
\end{align}
to obtain, using the telescoping property for the $b_i$,
\begin{multline}\label{cheb_application}
\sum_{v\in S}  \left(\max_{j} \frac{\sum_{j'=1} ^{j} c_{i_{j'},v}}{b_{j,v}}\right) \sum_{s=1}^{m_v}(b_{s,v}-b_{s-1,v})\epsilon_v(A) \lambda_{\tilde Y_{I_{s,v},v},v}(P)\\
\geq\sum_{v\in S}\sum_{j=1}^{m_v}c_{i_j,v}\epsilon_v(A) \lambda_{\tilde Y_{I_{j,v},v},v}(P).
\end{multline}
According to \cite[Example 5.4.11]{PAGI} (which is phrased in terms of the $s$-invariant, which is the reciprocal of the Seshadri constant), the Seshadri constant of an intersection is bounded below by the minimum of the Seshadri constants of the factors of the intersection. Therefore, for any $j\in\{1,\ldots,m_v\}$, we have the inequality
\begin{equation}
\epsilon_{\tilde Y_{I_{j,v},v}}(A)\geq \min\{\epsilon_{\tilde Y_{i_1,v}}(A),\ldots,\epsilon_{\tilde Y_{i_j,v}}(A)\}\geq \epsilon_v(A)-\max_k \tau_{k,v} \cdot 2  \min_{l}  \epsilon_{\tilde  Y_{l,v}}(A)
,\label{laz_ineq}
\end{equation}
where the second inequality is due to \eqref{sesh_bound}. Then \eqref{cheb_application} implies
\begin{multline*}
\sum_{v\in S}  \left(\max_{j} \frac{\sum_{j'=1} ^{j} c_{i_{j'},v}}{b_{j,v}}\right) \sum_{s=1}^{m_v}(b_{s,v}-b_{s-1,v})  (\epsilon_{\tilde Y_{I_{s,v},v}}(A)  \\
 \ +\max_k \tau_{k,v} \cdot 2  \min_{l}  \epsilon_{\tilde  Y_{l,v}}(A))\lambda_{\tilde Y_{I_{s,v},v},v}(P)\\
 \geq  \sum_{v\in S}\sum_{j=1}^{m_v}c_{i_j,v}\epsilon_v(A) \lambda_{\tilde Y_{I_{j,v},v},v}(P) \geq  \sum_{v\in S}\sum_{j=1}^{m_v}c_{i_j,v}\epsilon_{\tilde Y_{i_{j,v},v}}(A) \lambda_{\tilde Y_{I_{j,v},v},v}(P).
\end{multline*}
We now note that if we form a new list with the closed subscheme $\tilde Y_{I_{s,v},v}$ repeated 
$b_{s,v}-b_{s-1,v}$ times (and omitted if $b_{s,v}-b_{s-1,v}=0$), then the resulting closed subschemes are in general position. By Theorem \ref{ajm_main}, there exists a proper Zariski closed subset $Z$ of $X$ such that
\begin{align*}
\sum_{v\in S} \sum_{s=1}^{m_v}(b_{s,v}-b_{s-1,v})\epsilon_{\tilde Y_{I_{s,v},v}}(A) \lambda_{\tilde Y_{I_{s,v},v},v}(P)<(n+1+\epsilon')h_A(P)
\end{align*}
for all points $P\in X(k)\setminus Z$. We also observe that by \eqref{modif_sesh} and additivity for local heights, for $i=1,\ldots,q$, we have
\begin{equation*}
\epsilon_{\tilde Y_{i,v}}(A) \lambda_{\tilde Y_{i,v},v}(P)=\frac 1 {c_v(1+\tau_{i,v})} \cdot c_v(1+\tau_{i,v}) \epsilon_{Y_{i,v}}(A)\lambda_{Y_{i,v},v}(P)= \epsilon_{Y_{i,v}}(A)\lambda_{Y_{i,v},v}(P).
\end{equation*}
For given $\epsilon''>0$, by choosing the quantities $\tau_{k,v}$ sufficiently small (depending only on data involving the $Y_{i,v}$, including the finite number of pertaining domination inequalities of the form  \eqref{lhf_ample_domination} for the $Y_{i,v}$), we have
\begin{align*}
& \sum_{v\in S}  \sum_{s=1}^{m_v}(b_{s,v}-b_{s-1,v})(\max_k \tau_{k,v} \cdot 2  \min_{l}  \epsilon_{\tilde  Y_{l,v}}(A)) \lambda_{\tilde Y_{I_{s,v},v},v}(P)\\
\leq &\ \sum_{v\in S}  \sum_{s=1}^{m_v}(b_{s,v}-b_{s-1,v})(\max_k \tau_{k,v} \cdot 2 \epsilon_{\tilde  Y_{i_{s,v},v}}(A)) \lambda_{\tilde Y_{i_{s,v},v},v}(P)\\
\leq &\ \sum_{v\in S}  \sum_{s=1}^{m_v}(b_{s,v}-b_{s-1,v})(\max_k \tau_{k,v} \cdot 2 \epsilon_{Y_{i_{s,v},v}}(A)) \lambda_{Y_{i_{s,v},v},v}(P)\\
<&\ \epsilon''h_A(P).
\end{align*}
Taking $W=\Supp \tilde Y_{I_{j,v},v}\neq \emptyset$, we also find
\begin{align*}
\frac{\sum_{j'=1} ^{j} c_{i_{j'},v}}{b_{j,v}}\leq \frac{\alpha_v(W)}{\codim W}.
\end{align*}
Note that although the indices $i_{1,v},\ldots, i_{m_v,v}, v\in S$, depended on the point $P$, there are only finitely many possibilities for these indices (and so only finitely many applications of Theorem \ref{ajm_main} are needed).   Putting everything together, we find that there exists a proper Zariski closed set $Z$ of $X$ such that
\begin{align*}
\sum_{v\in S}\sum_{i=1}^q c_{i,v}\epsilon_{Y_{i,v}}(A) \lambda_{Y_{i,v},v}(P)\leq \left((n+1)\max_{\substack{v\in S\\ \emptyset\subsetneq W\subsetneq X}}\frac{\alpha_v(W)}{\codim W}+\epsilon\right)h_A(P)
\end{align*}
for all points $P\in X(k)\setminus Z$, as desired.
\end{proof}

\section{Results of Quang and relations to other work on Schmidt's Subspace Theorem for divisors in subgeneral position} \label{quang_section}

Quang \cite{Quang} proved the following generalization of Evertse-Ferretti's theorem for divisors in subgeneral position:

\begin{theorem}[Quang]
Let $X$ be a projective variety of dimension $n$ defined over a number field $k$.  Let $S$ be a finite set of places of $k$.  Let $D_1,\ldots, D_q$ be effective Cartier divisors on $X$, defined over $k$, in $m$-subgeneral position.  Suppose that there exists an ample Cartier divisor $A$ on $X$ and positive integers $d_i$ such that $D_i\sim d_iA$ for all $i$.  Let $\epsilon>0$.  Then there exists a proper Zariski-closed subset $Z\subset X$ such that for all points $P\in X(k)\setminus Z$,
\begin{equation*}
\sum_{i=1}^q \frac{m_{D_i,S}(P)}{d_i}< ((m-n+1)(n+1)+\epsilon)h_A(P).
\end{equation*}
\end{theorem}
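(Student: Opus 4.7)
The plan is to obtain this theorem as a direct corollary of Theorem \ref{main}. I would apply Theorem \ref{main} with $Y_{i,v}=D_i$ and $c_{i,v}=1$ for all $i$ and all $v\in S$ (no $v$-dependence is used), and then unpack the two sides of the resulting inequality.

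First, I would identify the Seshadri constants. Since each $D_i$ is a Cartier divisor, the blow-up of $X$ along $D_i$ is an isomorphism, and the ``exceptional divisor'' $E$ in Definition \ref{Sesh} is linearly equivalent to $D_i$. Thus $\epsilon_{D_i}(A)$ equals $\sup\{\gamma\in\QQ^{\geq 0}:A-\gamma D_i\text{ is nef}\}$, and combining this with $D_i\sim d_iA$ and the ampleness of $A$ immediately gives $\epsilon_{D_i}(A)=1/d_i$. Summing over $v\in S$ converts the left-hand side of Theorem \ref{main} into exactly $\sum_{i=1}^q m_{D_i,S}(P)/d_i$.

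Next, I would bound the combinatorial factor $\max_{v,W}\alpha_v(W)/\codim W$ on the right-hand side using $m$-subgeneral position. With $c_{i,v}=1$ and the $D_i$ independent of $v$, one has $\alpha_v(W)=|I_W|$, where $I_W=\{i:W\subset\Supp D_i\}$. For any nonempty proper closed subset $W\subsetneq X$ the inclusion $W\subseteq \bigcap_{i\in I_W}D_i$ gives $\codim W\geq \codim \bigcap_{i\in I_W}D_i$, and the subgeneral position hypothesis forces $|I_W|\leq m$ (else this intersection would already be empty). When $|I_W|\leq m-n$ the trivial bound $\codim W\geq 1$ yields $|I_W|/\codim W\leq m-n$, while for $m-n+1\leq |I_W|\leq m$ subgeneral position gives $\codim W\geq |I_W|+n-m$, hence $|I_W|/\codim W\leq |I_W|/(|I_W|+n-m)$. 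Since the function $x\mapsto x/(x+n-m)$ is decreasing on $x>m-n$, its maximum over the admissible range is attained at $x=m-n+1$, yielding the uniform bound $m-n+1$. Plugging this into Theorem \ref{main} produces the claimed factor $(n+1)(m-n+1)+\epsilon$.

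The approach is essentially bookkeeping on top of Theorem \ref{main}, so I do not anticipate a real obstacle. The only potentially subtle points are verifying the Seshadri computation $\epsilon_{D_i}(A)=1/d_i$ from the blow-up definition, and ensuring that the monotonicity argument selects the correct (smallest admissible) value of $|I_W|$ in the denominator. Everything else is a direct substitution.
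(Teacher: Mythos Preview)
Your proposal is correct and matches the paper's own derivation: the paper does not reprove Quang's theorem from scratch but observes (in the discussion following Theorem~\ref{Quangmain}) that for effective divisors in $m$-subgeneral position one has $\Delta\leq m-n+1$, whence the result follows from Theorem~\ref{main} with unit weights---exactly the route you take. Your Seshadri computation $\epsilon_{D_i}(A)=1/d_i$ and the monotonicity argument pinning down $\max_W \alpha(W)/\codim W\leq m-n+1$ are both sound.
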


This improved an earlier result of the second author \cite{levin_duke} which had a factor of $\frac{m(m-1)(n+1)}{m+n-2}+\epsilon$ on the right-hand side.

Quang's theorem was refined by Ji, Yan, and Yu in \cite{JYY} using the notion of the index:

\begin{definition}\label{index_def}
If $X$ is a projective variety of dimension $n$, we say that closed subschemes $Y_1,\ldots, Y_q$ of $X$ are in {\it $m$-subgeneral position with index $\kappa$} if $Y_1,\ldots, Y_q$ are in $m$-subgeneral position and for every subset $J\subset\{1,\ldots, q\}$ with $|J|\leq \kappa$ we have $\codim \cap_{j\in J} Y_j\geq |J|$.
\end{definition}

Then Ji, Yan, and Yu proved:

\begin{theorem}[Ji, Yan, Yu]
\label{tJYY}
Let $X$ be a projective variety of dimension $n$ defined over a number field $k$.  Let $S$ be a finite set of places of $k$.  Let $D_1,\ldots, D_q$ be effective Cartier divisors on $X$, defined over $k$, in $m$-subgeneral position with index $\kappa$.  Suppose that there exists an ample Cartier divisor $A$ on $X$ and positive integers $d_i$ such that $D_i\sim d_iA$ for all $i$.  Let $\epsilon>0$.  Then there exists a proper Zariski-closed subset $Z\subset X$ such that for all points $P\in X(k)\setminus Z$,
\begin{equation*}
\sum_{i=1}^q \frac{m_{D_i,S}(P)}{d_i}< \left(\left(\frac{m-n}{\max\{1,\min\{m-n,\kappa\}\}}+1\right)(n+1)+\epsilon\right)h_A(P).
\end{equation*}
\end{theorem}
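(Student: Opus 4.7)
The plan is to derive Theorem \ref{tJYY} as a direct application of Theorem \ref{main}, taking $Y_{i,v}=D_i$ (independent of $v$) and trivial weights $c_{i,v}=1$. Since $D_i$ is a Cartier divisor with $D_i\sim d_iA$, the blow-up of $X$ along the associated closed subscheme is trivial, giving $E=\pi^{*}D_i$ and hence
\begin{equation*}
\epsilon_{D_i}(A)=\sup\{\gamma\in\QQ^{\geq 0}:\pi^{*}A-\gamma E\text{ is nef}\}=\sup\{\gamma:(1-\gamma d_i)A\text{ is nef}\}=\tfrac{1}{d_i}.
\end{equation*}
With these choices, the left-hand side of Theorem \ref{main} is exactly $\sum_i(1/d_i)m_{D_i,S}(P)$, so it suffices to bound the combinatorial constant
\begin{equation*}
C:=\max_{\emptyset\subsetneq W\subsetneq X}\frac{|I(W)|}{\codim W},\qquad I(W):=\{i:W\subset\Supp D_i\},
\end{equation*}
by $\tfrac{m-n}{\max\{1,\min(m-n,\kappa)\}}+1$.

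Fix such a $W$ and set $t=|I(W)|$, $c=\codim W$. Since $m$-subgeneral position forces $\bigcap_{i\in J}D_i=\emptyset$ whenever $|J|=m+1$, we automatically have $t\leq m$. Two geometric inputs are then available: the index condition, applied to any subset $J\subset I(W)$ of size $\min(t,\kappa)$, gives $c\geq\min(t,\kappa)$; and $m$-subgeneral position gives $c\geq\max(1,t+n-m)$. A case analysis on $t$ yields the bound on $C$: for $t\leq\kappa$ the ratio is $\leq 1$; for $\kappa<t\leq\kappa+m-n$ (nonempty only when $\kappa\geq 1$) the index bound $c\geq\kappa$ dominates, giving $t/c\leq 1+(m-n)/\kappa$; for $t>\kappa+m-n$ the subgeneral bound $c\geq t+n-m$ dominates, and $t/(t+n-m)$ is maximized at $t=m$ with value $1+(m-n)/n$. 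Tracking which regime governs as $\kappa$ varies relative to $m-n$ and $n$ confirms that $C\leq 1+(m-n)/k$ with $k=\max\{1,\min(m-n,\kappa)\}$, and Theorem \ref{main} then concludes the argument.

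The main obstacle I anticipate is the intermediate regime $\kappa<t\leq m-n$, in which the subgeneral position codimension bound is vacuous ($t+n-m\leq 0$) and one must rely entirely on the index-derived bound $c\geq\kappa$; it is this step that forces the denominator $k$ in the JYY constant rather than the simpler $1$. A secondary delicate point is the limiting case $\kappa=0$, where the index hypothesis itself is vacuous and one must verify that the subgeneral position alone reproduces Quang's original bound $(m-n+1)(n+1)$; the worst configuration there involves a codimension-one $W$ contained in $m-n$ of the divisors, and one should confirm such a $W$ is consistent with $m$-subgeneral position. In fact, this route returns a constant at least as good as JYY's in every regime, and strictly better when $\kappa$ exceeds $n$, where our analysis produces $(n+1)\max\{1,m/\kappa\}$ in place of $(n+1)(1+(m-n)/\min(m-n,\kappa))$.
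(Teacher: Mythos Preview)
Your approach coincides with the paper's: Theorem~\ref{tJYY} is not given an independent proof there, but is subsumed (with the sharper Shi constant $(m-n)/\kappa+1$) by Theorem~\ref{main} via the bound on Quang's distributive constant, which is precisely the combinatorial estimate you carry out.

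One slip to flag: in your Case~3 the function $t\mapsto t/(t+n-m)=1+(m-n)/(t+n-m)$ is \emph{decreasing} for $t>m-n$, so its maximum on $\kappa+m-n<t\leq m$ occurs at the smallest $t$, not at $t=m$; your claimed bound $1+(m-n)/n$ is actually the minimum on that range and is not a valid upper bound. This is harmless for the final conclusion, however, because the correct supremum in Case~3 is at most $(\kappa+m-n+1)/(\kappa+1)=1+(m-n)/(\kappa+1)$, which is strictly dominated by your Case~2 bound $1+(m-n)/\kappa$ (and Case~3 is empty when $\kappa\geq n$). Hence the overall maximum is $C\leq 1+(m-n)/\kappa$, and since $\kappa\geq\max\{1,\min(m-n,\kappa)\}=k$, this is at most the JYY constant $1+(m-n)/k$, as you assert.
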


\begin{remark}
Ji, Yan, and Yu also claim that one can prove an inequality with $\max\{\frac{m}{2\kappa},1\}(n+1)$ on the right-hand side (\cite[Theorems 1.1 and 5.1]{JYY}). As they note, when $m\leq 2\kappa$ this gives a (claimed) coefficient of $n+1+\epsilon$ on the right-hand side. As observed already by Quang in \cite{Quang_Pacific}, their proof of this claim is incorrect, and we take the opportunity to give an explicit counterexample. Fix $k$ and $S$ with $|S|>1$. Take, for instance, $5$ distinct lines $L_1,\ldots, L_5$ in the plane with $4$ of them passing through a point $Q$ and let $D=L_1+\cdots +L_5$. Then $m=4$ and $\kappa=2$. Any line $L$ passing through $Q$ intersects the $5$ lines in $2$ points and so contains an infinite set $R_L$ of $(D,S)$-integral points. Then for such a line $L$ and points $P\in R_L\subset  L(k)$,
\begin{equation*}
\sum_{i=1}^5 m_{L_i,S}(P)=5h(P).
\end{equation*}
Since such lines $L$ are Zariski dense in $\mathbb{P}^2$ the best coefficient one can have in this case is $5$ (and not $3$ as given in their claim). More generally, it is easy to see that if one has at least $m+1$ hyperplanes in $\mathbb{P}^n$ in $m$-subgeneral position (and also not in $(m-1)$-subgeneral position) one cannot possibly do better than a coefficient of $m+1$ on the right-hand side, regardless of the index $\kappa$. \par
\end{remark}

Shi \cite{Shi} improved Theorem \ref{tJYY} in the Nevanlinna theory setting. The analogue of Shi's result in Diophantine approximation replaces the quantity $\left(\frac{m-n}{\max\{1,\min\{m-n,\kappa\}\}}+1\right)(n+1)$ in Theorem \ref{tJYY} by $\left(\frac{m-n}{\kappa}+1\right)(n+1)$.

More recently, Quang independently introduced the notion of a distributive constant $\Delta$ for a family of divisors in \cite{Quang_Pacific}, and in the recent preprint \cite{Quang22} Quang extended this notion to closed subschemes\footnote{There is a typo in \cite{Quang22}; a union should be replaced by an intersection in defining $\delta_{\mathcal{Y},X}$}:

\begin{definition}
The distributive constant of a family of closed subschemes $\mathcal{Y}=\{Y_1,\ldots, Y_q\}$ on a projective variety $X$ is given by
\begin{align*}
\delta_{\mathcal{Y}}=\max_{J\subset \{1,\ldots, q\}}\max\left\{1, \frac{\#J}{\codim \cap_{j\in J}\Supp Y_j}\right\},
\end{align*}
where we set $\codim\emptyset=\infty$.
\end{definition}

Then in \cite{Quang22}, Quang proves:

\begin{theorem}[Quang \cite{Quang22}]
\label{Quangmain}
Let $X$ be a  projective variety of dimension $n$  defined over a number field $k$, and let $S$ be a finite set of places of $k$. For each $v\in S$, let $Y_{1,v},\ldots, Y_{q,v}$ be closed subschemes of $X$, defined over $k$. Let $\Delta=\max_{v\in S}\delta_{\mathcal{Y}_v}$ be the maximum of the distributive constants of $\mathcal{Y}_v=\{Y_{1,v},\ldots, Y_{q,v}\}$. Let $A$ be an ample Cartier divisor on $X$, and $\epsilon>0$.  Then there exists a proper Zariski closed set $Z$ of $X$ such that
\begin{align*}
\sum_{v\in S}\sum_{i=1}^q \epsilon_{Y_{i,v}}(A) \lambda_{Y_{i,v},v}(P)<  \left(\Delta(n+1)+\epsilon\right)h_A(P)
\end{align*}
for all points $P\in X(k)\setminus Z$.
\end{theorem}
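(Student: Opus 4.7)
The plan is to derive Theorem \ref{Quangmain} as a direct corollary of Theorem \ref{main}. The natural approach is to specialize our general weighted inequality to the uniform weight case $c_{i,v}=1$ for every $i$ and $v \in S$, and then show that the maximum appearing on the right-hand side of Theorem \ref{main} is bounded above by the distributive constant $\delta_{\mathcal{Y}_v}$.

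First I would apply Theorem \ref{main} with $c_{i,v}=1$ for all indices. With this choice, the function $\alpha_v$ in Theorem \ref{main} simplifies to
\begin{align*}
\alpha_v(W)=\#\{i \in \{1,\ldots,q\} : W \subset \Supp Y_{i,v}\}.
\end{align*}
Theorem \ref{main} then gives, outside a proper Zariski closed set $Z \subset X$,
\begin{align*}
\sum_{v\in S}\sum_{i=1}^q \epsilon_{Y_{i,v}}(A)\lambda_{Y_{i,v},v}(P) < \left((n+1)\max_{\substack{v\in S\\ \emptyset\subsetneq W\subsetneq X}}\frac{\alpha_v(W)}{\codim W}+\epsilon\right)h_A(P).
\end{align*}
It then suffices to prove the estimate $\max_{\emptyset \subsetneq W \subsetneq X} \alpha_v(W)/\codim W \leq \delta_{\mathcal{Y}_v}$ for each $v \in S$.

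The key comparison step is the following. Fix $v \in S$ and a nonempty proper closed subset $W \subsetneq X$, and set $J(W) = \{i : W \subset \Supp Y_{i,v}\}$. Then $\alpha_v(W) = \#J(W)$ and $W \subset \bigcap_{j \in J(W)} \Supp Y_{j,v}$, so by monotonicity of codimension under inclusion we get $\codim W \geq \codim \bigcap_{j \in J(W)} \Supp Y_{j,v}$. Consequently,
\begin{align*}
\frac{\alpha_v(W)}{\codim W} = \frac{\#J(W)}{\codim W} \leq \frac{\#J(W)}{\codim \bigcap_{j \in J(W)} \Supp Y_{j,v}} \leq \delta_{\mathcal{Y}_v},
\end{align*}
where the final inequality is the definition of the distributive constant (if $J(W)=\emptyset$ the ratio is $0\leq\delta_{\mathcal{Y}_v}$, and if the intersection is empty there is nothing to check since then no such $W$ occurs). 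Taking the maximum over $v \in S$ and combining with the previous display yields the desired bound with coefficient $\Delta(n+1)+\epsilon$.

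Since this derivation is essentially bookkeeping, there is no real obstacle; the only substantive point is the comparison of the ratio $\alpha_v(W)/\codim W$ (which supremizes over \emph{all} proper nonempty closed subsets $W$) with the distributive constant $\delta_{\mathcal{Y}_v}$ (which supremizes over subsets $J$ of the index set). The inequality goes the favorable way precisely because enlarging $W$ to $\bigcap_{j \in J(W)}\Supp Y_{j,v}$ weakly decreases codimension while leaving $\alpha_v$ unchanged, so the worst case is already achieved by intersections of the $\Supp Y_{i,v}$. This also explains transparently why Theorem \ref{main} is strictly stronger than Theorem \ref{Quangmain}: weights $c_{i,v}$ give extra flexibility that cannot be recovered from the distributive constant formulation.
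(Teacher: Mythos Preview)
Your derivation is correct and matches the paper's own treatment: the paper does not give an independent proof of Theorem~\ref{Quangmain} but explains (immediately after its statement) that it follows from Theorem~\ref{main} by taking $c_{i,v}=1$ and observing that $\delta_{\mathcal{Y}_v}=\max\{1,\max_{\emptyset\subsetneq W\subsetneq X}\alpha_v(W)/\codim W\}$, the key point being precisely your observation that the maximum over $W$ may be restricted to intersections of the $\Supp Y_{i,v}$. Your argument proves the inequality $\max_W\alpha_v(W)/\codim W\le \delta_{\mathcal{Y}_v}$, which is exactly what is needed; the paper additionally notes the reverse inequality (up to the max with $1$), explaining why Theorem~\ref{main} is in fact slightly sharper.
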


As noted in \cite{Quang_Pacific}, if the $Y_{i,v}$ are taken to be effective divisors in $m$-subgeneral position, then 
\begin{align*}
\Delta\leq m-n+1
\end{align*}
and if  the $Y_{i,v}$ are divisors in $m$-subgeneral position with index $\kappa$, then
\begin{align*}
\Delta\leq\frac{m-n}{\kappa}+1.
\end{align*}

Thus, Theorem \ref{Quangmain} contains the previous results of Quang, Ji-Yan-Yu, and Shi. If we set $c_{i,v}=1$ for all $i$ and $v$, then in comparison with Theorem \ref{main},
\begin{align*}
\delta_{\mathcal{Y}_v}=\max\left\{1, \max_{\emptyset\subsetneq W\subsetneq X}\frac{\alpha_v(W)}{\codim W}\right\}.
\end{align*}
This follows from the definitions and the easy observation that in the maximum over $W$, we may restrict to the case where $W$ is the intersection of a subset of closed subschemes of $\mathcal{Y}_v$.

Thus, we may view Theorem \ref{main} as a weighted version of Quang's Theorem \ref{Quangmain}, with an improvement in the coefficient of the height in Theorem \ref{main} in certain cases (due to the maximum with $1$ in Quang's definition of $\delta_{\mathcal{Y}})$.

In particular, Theorem \ref{main} gives weighted generalizations of the previous results of Quang \cite{Quang}, Ji-Yan-Yu \cite{JYY}, and Shi \cite{Shi}, in the general context of closed subschemes, using weighted generalizations of the notions of $m$-subgeneral position (see Section \ref{Nochkalow}) and the index. 

Note that Quang's proof of Theorem \ref{Quangmain} is an adaptation of our proof of Theorem \ref{AJM_mthm}, while our proof of Theorem \ref{main} is derived as a formal consequence of the statement of Theorem \ref{AJM_mthm} using properties of heights, Seshadri constants, and the generalized Chebyshev inequality of Lemma \ref{elemineq}.

\section{A General inequality of Nochka-type}\label{nochka_section}

This section is influenced by the approach of Vojta in \cite{vojta_nochka} to the seminal work of Nochka on weights \cite{Noc} (see Remark \ref{NDiagram1}). Let $X$ be a  projective variety of dimension $n$ and let $D_1,\ldots, D_q$ be effective Cartier divisors on $X$,  everything defined over a number field $k$. We first restrict to the classical case where $c_i=1$ for $i=1,\ldots, q$. Then for a closed subset $W\subset X$ we let
\begin{align*}
\alpha(W)=\#\{i\mid W\subset \Supp D_i\}.
\end{align*}

We prove the following easy corollary to Theorem \ref{main}. 

\begin{corollary}
\label{W0cor}
Let $W_0$ be a closed subset of $X$.  Let $\epsilon>0$.  Then there exists a proper Zariski closed set $Z$ of $X$ such that
\begin{align*}
\sum_{i=1}^q \epsilon_{D_i}(A) m_{D_i,S}(P)< \left(\alpha(W_0)+(n+1)\max_{\emptyset\subsetneq W\subsetneq X}\frac{\alpha(W)-\alpha(W\cup W_0)}{\codim W}+\epsilon\right)h_A(P)
\end{align*}
for all points $P\in X(k)\setminus Z$.
\end{corollary}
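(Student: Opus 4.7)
The plan is to split the divisors into those whose support contains $W_0$ and those whose support does not, bounding each piece separately. Let $I_0 = \{i \in \{1,\ldots,q\} \mid W_0 \subset \Supp D_i\}$, so that $|I_0| = \alpha(W_0)$.

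For the divisors $D_i$ with $i \in I_0$, I would use the trivial bound from Lemma \ref{lemma2}. Choosing local height functions to be nonnegative (which we may do, since they are bounded below by an $M_k$-constant), we have $m_{D_i,S}(P) \leq h_{D_i}(P) + O(1)$, and thus for any $\epsilon'>0$,
\[
\sum_{i\in I_0} \epsilon_{D_i}(A)\, m_{D_i,S}(P) \leq (1+\epsilon')\,\alpha(W_0)\, h_A(P) + O(1).
\]

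For the divisors $D_i$ with $i \notin I_0$, I would apply Theorem \ref{main} with the weight choice $c_{i,v} = 1$ if $i \notin I_0$ and $c_{i,v} = 0$ if $i \in I_0$ (independent of $v$). The key combinatorial observation is that for any closed $W \subset X$, the inclusion $W \subset \Supp D_i$ combined with $i \in I_0$ (meaning $W_0 \subset \Supp D_i$) is equivalent to $W \cup W_0 \subset \Supp D_i$, so
\[
\alpha_v(W) = \#\{i \notin I_0 \mid W \subset \Supp D_i\} = \alpha(W) - \alpha(W \cup W_0).
\]
Theorem \ref{main} then yields the existence of a proper Zariski-closed $Z \subset X$ such that
\[
\sum_{i \notin I_0} \epsilon_{D_i}(A)\, m_{D_i,S}(P) < \left((n+1)\max_{\emptyset \subsetneq W \subsetneq X} \frac{\alpha(W) - \alpha(W \cup W_0)}{\codim W} + \epsilon'\right) h_A(P)
\]
for $P \in X(k) \setminus Z$.

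Adding the two inequalities and choosing $\epsilon'$ small enough to absorb the $\alpha(W_0) \epsilon'$ error term into $\epsilon$ gives the desired bound. There is no real obstacle here, since both ingredients are already established; the only subtlety is the bookkeeping identification of $\alpha_v(W)$ with $\alpha(W) - \alpha(W \cup W_0)$, which is a direct consequence of the definitions.
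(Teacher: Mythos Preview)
Your argument is correct and matches the paper's proof essentially step for step: split off the divisors containing $W_0$, bound their contribution trivially via Lemma~\ref{lemma} (or~\ref{lemma2}), then apply Theorem~\ref{main} to the remaining divisors using the identity $\alpha'(W)=\alpha(W)-\alpha(W\cup W_0)$. The only cosmetic difference is that you phrase the second step as choosing weights $c_{i,v}\in\{0,1\}$ in Theorem~\ref{main}, whereas the paper simply restricts to the smaller list of divisors.
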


\begin{proof}
After reindexing, suppose that $W_0\subset \Supp D_j$ for the $\alpha(W_0)$ values of $j$ satisfying $q-\alpha(W_0)+1\leq j\leq q$. Let $q'=q-\alpha(W_0)$.  Let  
\begin{align*}
\alpha'(W)=\#\{i\leq q'\mid W\subset \Supp D_i\}.
\end{align*}
Then clearly $\alpha'(W)=\alpha(W)-\alpha(W\cup W_0)$. Using Lemma \ref{lemma}, we have the inequality
\begin{align*}
\sum_{i=q'+1}^q \epsilon_{D_i}(A) m_{D_i,S}(P)\leq (\alpha(W_0)+\epsilon)h_A(P)
\end{align*}
for all $P\in X(k)\setminus \cup_{i=q'+1}^q D_i$.

Then using the above and Theorem \ref{main}, there exists a proper Zariski closed subset $Z$ of $X$ such that
\begin{align*}
\sum_{i=1}^q \epsilon_{D_i}(A) m_{D_i,S}(P)&=\sum_{i=1}^{q'} \epsilon_{D_i}(A) m_{D_i,S}(P)+
\sum_{i=q'+1}^q \epsilon_{D_i}(A) m_{D_i,S}(P)\\
&\leq \left((n+1)\max_{\emptyset\subsetneq W\subsetneq X}\frac{\alpha'(W)}{\codim W}+\epsilon\right)h_A(P)+(\alpha(W_0)+\epsilon)h_A(P)\\
&\leq \left(\alpha(W_0)+(n+1)\max_{\emptyset\subsetneq W\subsetneq X}\frac{\alpha(W)-\alpha(W\cup W_0)}{\codim W}+2\epsilon\right)h_A(P)
\end{align*}
for all points $P\in X(k)\setminus Z$.
\end{proof}

Building on the method of proof of \cite[Main Theorem]{vojta_nochka}, we now prove Theorem \ref{nochka_type_theorem}, which we restate for convenience. Recall that $D_I=\cap_{i\in I}D_i$. 

\begin{theorem}
Let $X$ be a  projective variety of dimension $n$  defined over a number field $k$ and let $S$ be a finite set of places of $k$.  Let $D_1,\ldots, D_q$ be effective Cartier divisors on $X$, defined over $k$, in $m$-subgeneral position.  We assume the following Bezout property holds for intersections among the divisors: If $I,J\subset \{1,\ldots, q\}$ then
\begin{align*}
\codim D_{I\cup J}=\codim (D_I\cap D_J)\leq \codim D_I+\codim D_J.
\end{align*}
Let $A$ be an ample divisor on $X$ and let $\epsilon>0$.  Then there exists a proper Zariski closed subset $Z$ of $X$ such that
\begin{align*}
\sum_{i=1}^q \epsilon_{D_{i}}(A) m_{D_{i},S}(P)< \left(\frac{3}{2}(2m-n+1)+\epsilon\right)h_A(P)
\end{align*}
for all $P\in X(k) \setminus Z$.
\end{theorem}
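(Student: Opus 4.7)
The plan is to apply Corollary~\ref{W0cor} with a carefully chosen non-empty proper closed subset $W_0 \subset X$. Setting $r_0 = \codim W_0$ and $a_0 = \alpha(W_0)$, the corollary reduces the proof to establishing a combinatorial inequality
\[
a_0 + (n+1)\max_{\emptyset \subsetneq W \subsetneq X} \frac{\alpha(W) - \alpha(W \cup W_0)}{\codim W} \leq \tfrac{3}{2}(2m-n+1)
\]
for some $W_0$ constructed from the combinatorics of the $D_i$.

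To bound the maximum, first observe that the ratio is unchanged when $W$ is replaced by $\bigcap_{i \in I(W)} D_i$, so I may restrict to $W$ equal to an intersection of a subfamily of the $D_i$. Letting $D' = \bigcap_{i \in I(W) \setminus I(W_0)} D_i$, one has $W \subseteq D'$, hence $\codim D' \leq \codim W$, and the Bezout hypothesis yields $\codim(D' \cap W_0) \leq \codim D' + r_0$. When $D' \cap W_0 \neq \emptyset$, the collection of divisors containing $D' \cap W_0$ has size at least $(\alpha(W) - \alpha(W \cup W_0)) + a_0$, coming from the disjoint union $(I(W) \setminus I(W_0)) \cup I(W_0) \subseteq I(D') \cup I(W_0)$; applying $m$-subgeneral position then yields
\[
\alpha(W) - \alpha(W \cup W_0) \leq \codim W + r_0 - a_0 + (m-n).
\]
In the complementary case $D' \cap W_0 = \emptyset$, Bezout forces $\codim W \geq n+1-r_0$, and the weaker estimate $\alpha(W) - \alpha(W \cup W_0) \leq \codim W + (m-n)$ from direct $m$-subgeneral position already suffices.

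With these bounds in hand, I would then take $W_0$ to be an intersection $\bigcap_{i \in I_0} D_i$ of a sub-collection of divisors optimizing a suitable invariant---most naturally, maximizing the ``excess'' $a_0 - r_0$, which by subgeneral position is always at most $m-n$. Substituting back, the problem reduces to a discrete optimization in the parameters $(r_0, a_0)$ together with the codimension $w$ of the worst $W$, under the constraint $a_0 \leq r_0 + (m-n)$ and whatever values of $(r_0,a_0)$ are actually realized by the given configuration.

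The main obstacle is carrying out this combinatorial optimization uniformly across configurations: different configurations realize different achievable values of $(r_0, a_0)$, and one must argue that \emph{some} valid choice always produces the bound $\tfrac{3}{2}(2m-n+1)$. The factor $\tfrac{3}{2}$, as opposed to the sharp $1$ of Nochka--Ru--Wong, reflects the rigidity of using a single $W_0$ in place of a full system of Nochka weights: both the small-$\codim W$ and the large-$\codim W$ worst cases for the maximum must be absorbed simultaneously, and balancing these two regimes is where the constant $\tfrac{3}{2}$ naturally arises.
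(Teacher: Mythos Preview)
Your framework matches the paper's: reduce to Corollary~\ref{W0cor}, replace each $W$ by an intersection $D_J$, and split into cases according to whether the intersection with $W_0$ is empty, using the Bezout hypothesis and $m$-subgeneral position. The gap is exactly the one you flag as ``the main obstacle'': the choice of $W_0$. Your suggested invariant---maximizing the excess $a_0-r_0$---does not work. For instance, take $m$ hyperplanes in $\mathbb{P}^n$ through a point $P_0$ together with $m-n+1$ copies of a single hyperplane $H$ not through $P_0$. Both $P_0$ and $H$ achieve the maximal excess $m-n$; choosing $W_0=P_0$ and testing $W=H$ (empty intersection, so your second case applies) gives ratio $m-n+1$ and hence $a_0+(n+1)(m-n+1)=m+(n+1)(m-n+1)$, which already exceeds $\tfrac{3}{2}(2m-n+1)$ for $n=2$, $m=4$.

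The paper's choice is different. After first disposing of the case $\max_W \alpha(W)/\codim W\leq (2m-n+1)/(n+1)$ directly via Theorem~\ref{main}, one takes $W_0$ to \emph{maximize}
\[
\sigma=\frac{n+1-\codim W_0}{2m-n+1-\alpha(W_0)},
\]
the slope of the final segment in the Nochka diagram. This choice---rather than $m$-subgeneral position applied to $D'\cap W_0$---is what drives the nonempty-intersection case: from $\alpha(W\cup W_0)+\alpha(W\cap W_0)\geq\alpha(W)+\alpha(W_0)$ and the Bezout bound $\codim W\geq\codim(W\cap W_0)-\codim W_0$ one obtains
\[
\frac{\alpha(W)-\alpha(W\cup W_0)}{\codim W}\leq\frac{\alpha(W\cap W_0)-\alpha(W_0)}{\codim(W\cap W_0)-\codim W_0}\leq\frac{1}{\sigma},
\]
the last step being a mediant inequality coming from the maximality of $\sigma$. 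The empty-intersection case also yields $\leq 1/\sigma$. Thus everything collapses to bounding $\alpha(W_0)+(n+1)/\sigma$. The case hypothesis forces $(\alpha(W_0),\codim W_0)$ strictly below the line $y=\tfrac{n+1}{2m-n+1}x$ and, by $m$-subgeneral position, on or left of $y=x+n-m$; hence it lies below and left of their intersection $\bigl(\tfrac{2m-n+1}{2},\tfrac{n+1}{2}\bigr)$, giving $\alpha(W_0)<\tfrac{2m-n+1}{2}$ and $(n+1)/\sigma<2m-n+1$, whence the factor $\tfrac{3}{2}$.
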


\begin{remark}
\label{NDiagram1}
We briefly describe the main idea of the proof. Vojta constructs the classical Nochka weights (for hyperplanes) as slopes of line segments connecting the lower convex hull of the set of points consisting of  $(\alpha(W), \codim W)$, $\emptyset\subsetneq W\subset X$, and the point $P=(2m-n+1,n+1)$.  When this lower convex hull consists of just $(0,0)$ and $P$, the theorem follows easily from Theorem \ref{main}. Otherwise, we focus on the last line segment associated to the lower convex hull in the ``Nochka diagram," connecting (for some $W_0$) the points $(\alpha(W_0),\codim W_0)$ and $P$, with slope $\sigma$. Then we (essentially) assign the weight $0$ to divisors $D_i$ with $W_0\subset \Supp D_i$, and the weight $\sigma$ to all other divisors $D_i$. This partial use of the Nochka diagram loses a factor of $\frac{3}{2}$ (compared to the case of hyperplanes), but works in more generality.
\end{remark}

\begin{proof}

Suppose first that for every $W\subset X$, $W\neq \emptyset$,
\begin{align*}
\codim W\geq \frac{n+1}{2m-n+1}\alpha(W).
\end{align*}
Then it is immediate from Theorem \ref{main} that \eqref{almostNochka} holds with the better coefficient $2m-n+1+\epsilon$ on the right-hand side.

Otherwise, we choose $W_0$ in Corollary \ref{W0cor} such that the quantity
\begin{align*}
\frac{n+1-\codim W}{2m-n+1-\alpha(W)}
\end{align*}
is maximized at $W=W_0$. Replacing $W_0$ by the intersection of the divisors containing $W_0$, we may assume that $W_0$ is an intersection $D_I$ for some $I\subset \{1,\ldots, q\}$.  Let
\begin{align*}
\sigma=\frac{n+1-\codim W_0}{2m-n+1-\alpha(W_0)}
\end{align*}
(when the divisors $D_i$ are hyperplanes, this agrees with the slope $\sigma=\tau$ between the  points $P_s$ and $P_{s+1}$ in the corresponding Nochka diagram \cite[p.~232]{vojta_nochka}).

Let $\emptyset\subsetneq W\subsetneq X$.  By Corollary \ref{W0cor} we need to show that
\begin{align*}
\alpha(W_0)+(n+1)\frac{\alpha(W)-\alpha(W\cup W_0)}{\codim W}\leq \frac{3}{2}(2m-n+1).
\end{align*}

By replacing $W$ by the intersection of the divisors containing $W$, it suffices to consider the case that $W=D_J$ for some nonempty $J\subset\{1,\ldots, q\}$ (the case $\alpha(W)=0$, $J=\emptyset$, follows easily from $m$-subgeneral position).

We first claim that
\begin{align*}
\frac{\alpha(W)-\alpha(W\cup W_0)}{\codim W}\leq \frac{1}{\sigma}.
\end{align*}

We consider two cases.

Case I:  $W\cap W_0=\emptyset$.

In this case, by our Bezout assumption,
\begin{align*}
\codim W+\codim W_0\geq n+1,
\end{align*}
and from $m$-subgeneral position we have
\begin{align*}
\codim W_0&\geq \alpha(W_0)+n-m,\\
\codim W&\geq \alpha(W)+n-m.
\end{align*}

Then
\begin{align*}
\frac{1}{\sigma}&=\frac{2m-n+1-\alpha(W_0)}{n+1-\codim W_0}\\
&\geq \frac{m+1-\codim W_0}{n+1-\codim W_0}\\
&=1+\frac{m-n}{n+1-\codim W_0}\\
&\geq 1+\frac{m-n}{\codim W}\\
&=\frac{\codim W+m-n}{\codim W}\\
&\geq \frac{\alpha(W)}{\codim W}\\
&\geq \frac{\alpha(W)-\alpha(W\cup W_0)}{\codim W}
\end{align*}
as desired.

Case II:  $W\cap W_0\neq\emptyset$.

We use the two inequalities:
\begin{align*}
\alpha(W\cup W_0)+\alpha(W\cap W_0)\geq \alpha(W)+\alpha(W_0)
\end{align*}
and (from our Bezout assumption)
\begin{align*}
\codim W\geq \codim(W\cap W_0)-\codim W_0.
\end{align*}

Then
\begin{align*}
\frac{\alpha(W)-\alpha(W\cup W_0)}{\codim W}\leq \frac{\alpha(W\cap W_0)-\alpha(W_0)}{\codim(W\cap W_0)-\codim W_0}\leq \frac{1}{\sigma},
\end{align*}
where the second inequality follows from the definition of $W_0$ and $\sigma$.

Then in either case
\begin{align*}
\alpha(W_0)+(n+1)\frac{\alpha(W)-\alpha(W\cup W_0)}{\codim W}\leq \alpha(W_0)+\frac{n+1}{\sigma}.
\end{align*}

Finally, we note that from our assumptions $P=(\alpha(W_0),\codim W_0)$ lies below the line $y=\frac{n+1}{2m-n+1}x$.   From $m$-subgeneral position, it also lies to the left of the line $y=x+n-m$. Therefore $P$ must lie below and to the left of the point $W=(\frac{2m-n+1}{2},\frac{n+1}{2})$, which is the intersection of the two given lines.  Thus,
\begin{align*}
\alpha(W_0)&<\frac{2m-n+1}{2},\\
\codim W_0&< \frac{n+1}{2}.
\end{align*}
Since $\sigma>\frac{n+1}{2m-n+1}$ by assumption, it follows that
\begin{align*}
\alpha(W_0)+\frac{n+1}{\sigma}<\frac{3}{2}(2m-n+1)
\end{align*}
as desired.
\end{proof}

\begin{remark}
From an examination of the proof, it is possible to slightly improve the inequality \eqref{almostNochka}. For instance, at the end of the proof, one could use the better estimate $\alpha(W_0)\leq \lfloor\frac{2m-n}{2}\rfloor$, and similarly slightly improve the used lower bound for $\sigma$. We state the inequality in the present form both for its simplicity and easy relation with Nochka-Ru-Wong's inequality, and since we anticipate that further refinements of these methods may lead to more significant improvements. 
\end{remark}

\section{Nochka-Ru-Wong for surfaces and threefolds}
\label{Nochkalow}
We first extend the notion of $m$-subgeneral position to closed subschemes $Y_1,\ldots, Y_q$ with nonnegative weights $c_1,\ldots, c_q$ (we will reserve $\omega_i$ to denote certain Nochka-type weights below). Following the introduction, we write 
\begin{align*}
\alpha(W)=\sum_{\substack{i\\ W\subset \Supp Y_i}}c_{i}.
\end{align*}

Then we say that $Y_1,\ldots, Y_q$ with weights $c_1,\ldots, c_q$ are in $m$-subgeneral position if for every closed subset $\emptyset \neq W\subset X$, we have
\begin{align*}
\codim W\geq \alpha(W)+n-m.
\end{align*}

We now prove a general (weighted) version of Nochka-Ru-Wong's inequality for closed subschemes, assuming that one can construct ``Nochka weights" that satisfy certain properties. We will subsequently apply this result with appropriately chosen weights in dimensions $\leq 3$.

\begin{theorem}\label{generalnochka}
Let $X$ be a projective variety of dimension $n$  defined over a number field $k$ and let $S$ be a finite set of places of $k$.  Let $Y_1,\ldots, Y_q$ be closed subschemes of $X$, defined over $k$, with nonnegative real weights $c_1,\ldots, c_q$.  Suppose that there exist nonnegative real weights $\omega_1,\ldots, \omega_q\geq 0$, not all $0$, such that for any nonempty closed subset $W\subset X$,
\begin{align}
\label{NE}
\alpha^{\text{Nochka}}(W):= \sum_{\substack{i\\ W\subset \Supp Y_i}}c_{i}\omega_i\leq \codim W.
\end{align}
Let $\tau=\max_i \omega_i$ and let
\begin{align*}
B=\frac{n+1}{\tau}+\sum_{i=1}^q c_i\left(1-\frac{\omega_i}{\tau}\right).
\end{align*}
Let $\epsilon>0$.  Then there exists a proper Zariski closed subset $Z$ of $X$ such that
\begin{align*}
\sum_{i=1}^q c_i\epsilon_{Y_{i}}(A) m_{Y_{i},S}(P)<(B+\epsilon)h_A(P)
\end{align*}
for all $P\in X(k) \setminus Z$.
\end{theorem}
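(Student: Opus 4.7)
The plan is to split the weight $c_i$ attached to each $Y_i$ into a Nochka-weighted piece, to which Theorem~\ref{main} applies with a tame codimension bound, and a residual piece, which can be bounded trivially via Lemma~\ref{lemma2}. Since not all $\omega_i$ vanish we have $\tau>0$, so for every $i$ we may write
\begin{align*}
c_i=\frac{c_i\omega_i}{\tau}+c_i\left(1-\frac{\omega_i}{\tau}\right),
\end{align*}
where, by the definition of $\tau$, both summands are nonnegative.

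For the first piece I would apply Theorem~\ref{main} with the weights $c_{i,v}':=c_i\omega_i/\tau$ (taken independent of $v\in S$). Hypothesis \eqref{NE} gives, for every nonempty closed $W\subsetneq X$ and every $v\in S$,
\begin{align*}
\alpha_v(W)=\sum_{\substack{i\\ W\subset \Supp Y_i}}\frac{c_i\omega_i}{\tau}\leq \frac{\codim W}{\tau},
\end{align*}
so that $\max_{v,W}\alpha_v(W)/\codim W\leq 1/\tau$. Theorem~\ref{main} then produces a proper Zariski closed $Z_1\subsetneq X$ with
\begin{align*}
\sum_{i=1}^q\frac{c_i\omega_i}{\tau}\,\epsilon_{Y_i}(A)\,m_{Y_i,S}(P)<\left(\frac{n+1}{\tau}+\tfrac{\epsilon}{2}\right)h_A(P),\qquad P\in X(k)\setminus Z_1.
\end{align*}

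For the residual piece I would use Lemma~\ref{lemma2}, which, combined with the lower-boundedness of the local heights $\lambda_{Y_i,v}$ (so that $m_{Y_i,S}(P)\leq h_{Y_i}(P)+O(1)$), yields $\epsilon_{Y_i}(A)\,m_{Y_i,S}(P)\leq (1+\eta)h_A(P)+O(1)$ for any $\eta>0$ and all $P\in X(k)\setminus \Supp Y_i$. Choosing $\eta$ small enough (depending on $\epsilon$ and $\sum_i c_i(1-\omega_i/\tau)$), this gives
\begin{align*}
\sum_{i=1}^q c_i\!\left(1-\frac{\omega_i}{\tau}\right)\epsilon_{Y_i}(A)\,m_{Y_i,S}(P)\leq \left(\sum_{i=1}^q c_i\!\left(1-\frac{\omega_i}{\tau}\right)+\tfrac{\epsilon}{2}\right)h_A(P)+O(1)
\end{align*}
on $X(k)\setminus \bigcup_i \Supp Y_i$.

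Adding the two displays, setting $Z=Z_1\cup \bigcup_i \Supp Y_i$, and recognising the right-hand side as $(B+\epsilon)h_A(P)+O(1)$ gives the theorem (the $O(1)$ is absorbed by slightly adjusting $\epsilon$ or enlarging $Z$ to handle points of bounded height). There is no serious obstacle here: the derivation is essentially a bookkeeping step, and the real content of the section is the \emph{construction} of Nochka-type weights $\omega_i$ satisfying \eqref{NE} with a favourable value of $B$, which is what the subsequent results in dimensions $\leq 3$ will supply.
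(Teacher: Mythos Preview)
Your proof is correct and follows essentially the same approach as the paper: split $c_i$ into the Nochka-weighted part $c_i\omega_i/\tau$ (handled by Theorem~\ref{main}, using \eqref{NE} to bound $\alpha_v(W)/\codim W$ by $1/\tau$) and the residual part $c_i(1-\omega_i/\tau)$ (handled by the trivial height bound of Lemma~\ref{lemma2}). The only cosmetic difference is that the paper applies Theorem~\ref{main} with weights $c_i\omega_i$ and then divides by $\tau$, whereas you absorb the $1/\tau$ into the weights directly; the two are equivalent.
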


\begin{proof}
We split the proximity functions as
\begin{align*}
\sum_{i=1}^q c_i\epsilon_{Y_{i}}(A) m_{Y_{i},S}(P)=\sum_{i=1}^q \left(c_i-\frac{c_i\omega_i}{\tau}\right)\epsilon_{Y_{i}}(A)m_{Y_i,S}(P)+\frac{1}{\tau}\sum_{i=1}^q c_i\omega_i\epsilon_{Y_{i}}(A)m_{Y_i,S}(P).
\end{align*}

We have the trivial bound coming from Lemma \ref{lemma2}:
\begin{align*}
\sum_{i=1}^q \left(c_i-\frac{c_i\omega_i}{\tau}\right)\epsilon_{Y_{i}}(A)m_{Y_i,S}(P)\leq \sum_{i=1}^q \left(c_i-\frac{c_i\omega_i}{\tau}+\epsilon\right)h_A(P).
\end{align*}
On the other hand, by Theorem \ref{main} applied with $\alpha^{\text{Nochka}}(W)$ from \eqref{NE}, there exists a proper Zariski closed subset $Z$ of $X$ such that
\begin{align*}
\frac{1}{\tau}\sum_{i=1}^q c_i\omega_i\epsilon_{Y_{i}}(A)m_{Y_i,S}(P)\leq \left(\frac{n+1}{\tau}+\epsilon\right)h_A(P)
\end{align*}
for all $P\in X(k) \setminus Z$.
Combining the above inequalities yields
\begin{align*}
\sum_{i=1}^q c_i\epsilon_{Y_{i}}(A) m_{Y_{i},S}(P)<(B+\epsilon)h_A(P)
\end{align*}
for all $P\in X(k) \setminus Z$, as desired.
\end{proof}

We now show that under mild hypotheses, one can find suitable ``Nochka weights" in dimensions $\leq 3$ and derive a generalization of Nochka-Ru-Wong's inequalities.
\begin{remark}
A key feature in dimensions $\leq 3$ is that the lower convex hull in the ``Nochka diagram" (see Remark \ref{NDiagram1}) yields at most $2$ line segments, which permits a finer analysis of the problem. 
\end{remark}

\begin{theorem}\label{nochka23irred}
Let $X$ be a projective variety of dimension $n\leq 3$  defined over a number field $k$ and let $S$ be a finite set of places of $k$.  Let $D_1,\ldots, D_q$ be ample effective Cartier divisors on $X$, defined over $k$, with nonnegative real weights $c_1,\ldots, c_q$, in $m$-subgeneral position.  Suppose that $\Supp D_i$ is irreducible for all $i$.  Let $\epsilon>0$.  Then there exists a proper Zariski closed subset $Z$ of $X$ such that
\begin{align*}
\sum_{i=1}^q c_i\epsilon_{D_{i}}(A) m_{D_{i},S}(P)<(2m-n+1+\epsilon)h_A(P)
\end{align*}
for all $P\in X(k) \setminus Z$.
\end{theorem}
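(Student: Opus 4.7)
The strategy is to apply Theorem \ref{generalnochka}: it suffices to construct nonnegative weights $\omega_1,\ldots,\omega_q$, not all zero, satisfying \eqref{NE}, such that $B=\frac{n+1}{\tau}+\sum_{i=1}^q c_i(1-\omega_i/\tau)\le 2m-n+1$, where $\tau=\max_i\omega_i$. I would construct these ``Nochka weights'' via the diagram approach of Vojta (compare Remark \ref{NDiagram1} and \cite{vojta_nochka}): plot the points $(\alpha(W),\codim W)$ for $W$ an intersection of some of the $\Supp D_i$, together with $(0,0)$ and the target $P^\ast=(2m-n+1,n+1)$, and form the lower convex hull. The slopes of its segments, read from right to left in increasing order, yield the desired weights: to each $D_i$ one assigns $\omega_i$ equal to the slope of the hull segment that accounts for $D_i$'s contribution to the $x$-coordinate. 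Condition \eqref{NE} then follows from the lower convex hull property applied at the point $(\alpha(W),\codim W)$.

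The restriction $n\le 3$ is crucial because the breakpoints of the hull lie at integer $y$-values in $\{1,2,\ldots,n\}$, so the hull consists of at most $n+1\le 4$ segments, permitting an explicit combinatorial analysis. The irreducibility of $\Supp D_i$ ensures that every divisor is a single codimension-$1$ piece, so the level $y=1$ of the diagram consists of the distinct supports $\Supp D_i$ without multiplicities, avoiding pathologies where a single divisor contributes to multiple diagram points or where a support splits into several codimension-$1$ components at different horizontal positions. The weighted $m$-subgeneral position hypothesis constrains the diagram points to lie on or above the line $y=x+n-m$, which together with the placement of $P^\ast$ determines the possible shapes of the hull.

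The bulk of the work, and the main obstacle, is the explicit case analysis verifying $B\le 2m-n+1$ based on the shape of the lower convex hull. After normalizing $\tau=1$, one must show $\sum_i c_i\omega_i\ge \sum_i c_i-2(m-n)$, handling each possibility (one, two, or at most three hull segments in the relevant range); in each case one uses the subgeneral-position line $y=x+n-m$ to control the extreme breakpoints and verifies that the resulting deficiency $\sum c_i(1-\omega_i)$ is at most $2(m-n)$. For larger $n$, the number of possible hull segments grows and the interaction of their slopes becomes delicate; this is why Theorem \ref{nochka_type_theorem} (whose proof exploits only the ``last'' hull segment, as in Remark \ref{NDiagram1}) loses a factor of $\frac{3}{2}$ in general dimensions.
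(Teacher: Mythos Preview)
Your plan is in the right spirit---the paper also reduces to Theorem~\ref{generalnochka} with Nochka-diagram-inspired weights---but the execution in the paper is more concrete, and your sketch glosses over the step that actually requires the hypotheses.

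First, a simplification you are missing: for $n\in\{2,3\}$, $m$-subgeneral position already gives $\alpha(W)/\codim W\le 1+(m-n)/\codim W\le (2m-n+1)/(n+1)$ whenever $\codim W\ge 2$ (since then $\codim W\ge (n+1)/2$). Hence every such point $(\alpha(W),\codim W)$ lies on or above the segment from $(0,0)$ to $P^\ast$, and the lower convex hull has \emph{at most one} interior breakpoint, at height $y=1$. So only \emph{two} weight values are ever needed, not up to four as you suggest. The paper accordingly fixes an irreducible $W_0$ of codimension~$1$ with $c:=\alpha(W_0)$ maximal, sets $\omega_i=1/c$ if $\Supp D_i=W_0$ and $\omega_i=n/(2m-n+1-c)$ otherwise, and computes $B=2m-n+1$ exactly from these two values.

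Second, and more seriously, your assertion that ``condition \eqref{NE} then follows from the lower convex hull property applied at the point $(\alpha(W),\codim W)$'' is where the real work lies, and it does not follow formally. Unlike the hyperplane case, there is no linear-algebraic reason why an arbitrary point $(\alpha(W),\codim W)$ must sit above the hull edges; one has to prove it. The paper's case analysis uses \emph{ampleness} (not merely irreducibility) in an essential way: if $W\ne W_0$ are both irreducible of codimension~$1$, ampleness forces $\codim(W\cap W_0)=2$, whence $m$-subgeneral position at $W\cap W_0$ gives $\alpha(W)+c\le m-n+2$; and for $n=3$, if $\codim W=2$ and $W\not\subset W_0$, ampleness gives $\dim(W\cap W_0)=0$, hence $\alpha(W)+c\le m$. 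These intersection bounds are exactly what makes \eqref{NE} go through for the two-valued weights above. Your outline never invokes ampleness, so this crucial step is unaccounted for; you should make explicit how the ampleness hypothesis enters the verification of \eqref{NE}, and you will then find that the full hull machinery collapses to the simple two-weight choice the paper makes.
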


\begin{proof}
When $n=1$ this is immediate from the definition of $m$-subgeneral position and Theorem \ref{main}. Suppose $n\in \{2,3\}$. Let
\begin{align*}
c=\max_{\substack{W\subset X\\\codim W=1}}\alpha(W)=\max_{\substack{W\subset X\\\codim W=1}}\frac{\alpha(W)}{\codim W}.
\end{align*}
First, we note that by Theorem \ref{main} the desired result holds if
\begin{align*}
\frac{\alpha(W)}{\codim W}\leq \frac{2m-n+1}{n+1}
\end{align*}
for all nonempty proper closed subsets $W\subset X$. By definition of $m$-subgeneral position, $\alpha(W)\leq \codim W+m-n$, and if $\codim W\geq \frac{n+1}{2}$, then
\begin{align*}
\frac{\alpha(W)}{\codim W}\leq 1+\frac{m-n}{\codim W}\leq 1+\frac{2(m-n)}{n+1}=\frac{2m-n+1}{n+1}.
\end{align*}
For $n\in \{2,3\}$, $\codim W\geq \frac{n+1}{2}$ if and only if $\codim W>1$. Thus, the result follows from Theorem \ref{main} unless 
\begin{align}
\label{ccond}
c>\frac{2m-n+1}{n+1},
\end{align}
which we now assume.

Let $W_0$ be an irreducible closed subset of $X$ with $\codim W_0=1$ such that $c=\alpha(W_0)$. We define
\begin{align*}
\omega_i=
\begin{cases}
\frac{1}{c} & \text{if $\Supp D_i=W_0$}\\
\frac{n}{2m-n+1-c} & \text{otherwise}.
\end{cases}
\end{align*}
The condition \eqref{ccond} implies that $\frac{1}{c}<\frac{n}{2m-n+1-c}$ and so in particular $\omega_i\leq \frac{n}{2m-n+1-c}$ for all $i$.

We claim that for any nonempty closed subset $W\subset X$,
\begin{align*}
\sum_{\substack{i\\ W\subset \Supp D_i}}c_{i}\omega_i\leq \codim W.
\end{align*}
Suppose first that $\dim W=0$. From $m$-subgeneral position, $c\leq 1+m-n$ and $\alpha(W)\leq m$. Then
\begin{align*}
\sum_{\substack{i\\ W\subset \Supp D_i}}c_{i}\omega_i\leq \frac{n}{2m-n+1-c}\alpha(W)\leq \frac{n}{m}m\leq n=\codim W.
\end{align*}
Suppose now that $\codim W=1$. We may assume $W$ is irreducible and $W=D_j$ for some $j$. If $W=W_0$, then from the definitions we have
\begin{align*}
\sum_{\substack{i\\ W\subset \Supp D_i}}c_{i}\omega_i=\frac{1}{c}\alpha(W_0)=1.
\end{align*}
Otherwise, since the divisors $D_i$ are ample, we have $\codim (W\cap W_0)=2$ and
\begin{align*}
\alpha(W)+\alpha(W_0)=\alpha(W)+c&\leq \alpha(W\cap W_0)\\
&\leq \codim(W\cap W_0)+m-n=m-n+2.
\end{align*}
Then
\begin{align*}
\sum_{\substack{i\\ W\subset \Supp D_i}}c_{i}\omega_i=\frac{n}{2m-n+1-c} \alpha(W)\leq \frac{n(m-n+2-c)}{2m-n+1-c}.
\end{align*}
Suppose first that $n=2$. Then from $m$-subgeneral position with $W=X$, we have $m\geq n\geq 2$. Then from \eqref{ccond}, $c\geq \frac{2m-1}{3}\geq 1$, and this implies
\begin{align*}
\frac{n(m-n+2-c)}{2m-n+1-c}=\frac{2(m-c)}{2m-1-c}\leq 1=\codim W
\end{align*}
as desired. If $n=3$, by \eqref{ccond}, $c>\frac{2m-2}{4}=\frac{m-1}{2}$, and this implies
\begin{align}
\label{n3calc}
\frac{n(m-n+2-c)}{2m-n+1-c}=\frac{3(m-1-c)}{2m-2-c}\leq 1=\codim W.
\end{align}
Finally, we consider the case $n=3$ and $\dim W=1$, $\codim W=2$. Suppose first that $W\subset W_0$. Let $\alpha'=\alpha(W)-\alpha(W_0)=\alpha(W)-c$. Then
\begin{align*}
\alpha(W)=\alpha'+c\leq \codim(W)+m-n=m-1.
\end{align*}
From the definitions and the calculation in \eqref{n3calc}, we find
\begin{align*}
\sum_{\substack{i\\ W\subset \Supp D_i}}c_{i}\omega_i&=\alpha'\frac{n}{2m-n+1-c}+c\frac{1}{c}\\
&\leq \frac{3(m-1-c)}{2m-2-c}+1\leq 2=\codim W.
\end{align*}
Suppose now that $W\not\subset W_0$. Then from ampleness, $\dim (W\cap W_0)=0$ and from $m$-subgeneral position,
\begin{align*}
\alpha(W)+\alpha(W_0)=\alpha(W)+c&\leq \alpha(W\cap W_0)\\
&\leq \codim(W\cap W_0)+m-n=m.
\end{align*}
Therefore, since $m\geq n\geq 3$ and $c>\frac{m-1}{2}\geq 1$, we easily find
\begin{align*}
\sum_{\substack{i\\ W\subset \Supp D_i}}c_{i}\omega_i&=\alpha(W)\frac{n}{2m-n+1-c}\leq \frac{3(m-c)}{2m-2-c}\\
&\leq 2=\codim W.
\end{align*}
This proves \eqref{NE} of Theorem \ref{generalnochka}. Finally, we have
\begin{align*}
\tau=\max\omega_i=\frac{n}{2m-n+1-c}
\end{align*}
and we compute 
\begin{align*}
B&=\frac{n+1}{\tau}+\sum_{i=1}^q c_i\left(1-\frac{\omega_i}{\tau}\right)=\frac{n+1}{\tau}+\sum_{\substack{i\\\Supp D_i=W_0}}c_i\left(1-\frac{1}{c\tau}\right)\\
&=\frac{n+1}{\tau}+c\left(1-\frac{1}{c\tau}\right)=\frac{n+1}{\tau}+c-\frac{1}{\tau}=\frac{n}{\tau}+c\\
&=(2m-n+1-c)+c=2m-n+1.
\end{align*}
Then an application of Theorem \ref{generalnochka} completes the proof. \end{proof}

Finally, under some additional hypotheses, we can remove the condition that $\Supp D_i$ is irreducible. In particular, Theorem \ref{nochka_type_theorem2} follows from:

\begin{theorem}
\label{theorempic1}
Let $X$ be a nonsingular projective variety of dimension $n\leq 3$,  defined over a number field $k$, with Picard number $\rho=1$. Let $S$ be a finite set of places of $k$.  
Let $D_1,\ldots, D_q$ be effective divisors on $X$, defined over $k$, with nonnegative weights $c_1,\ldots, c_q$, in $m$-subgeneral position.  Let $A$ be an ample divisor on $X$ and let $\epsilon>0$.  Then there exists a proper Zariski closed subset $Z$ of $X$ such that
\begin{align*}
\sum_{i=1}^q c_i\epsilon_{D_{i}}(A) m_{D_{i},S}(P)<(2m-n+1+\epsilon)h_A(P)
\end{align*}
for all $P\in X(k) \setminus Z$.
\end{theorem}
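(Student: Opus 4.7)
The plan is to reduce Theorem \ref{theorempic1} to Theorem \ref{nochka23irred} by passing from each $D_i$ to its prime components. Since $X$ is nonsingular, each effective divisor admits a unique decomposition $D_i = \sum_k b_{ik} E_k$ with $b_{ik}\in \ZZ_{\geq 0}$, where $E_1,\ldots, E_r$ are the distinct prime divisors appearing in $\bigcup_j \Supp D_j$. The goal is to show that these $E_k$, equipped with suitably redistributed weights, satisfy all the hypotheses of Theorem \ref{nochka23irred}, and that applying it recovers the stated inequality.

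The $\rho(X)=1$ hypothesis is precisely what makes this redistribution clean. Since $\NS(X)_\QQ\cong\QQ$, there exist positive rationals $d_i, e_k$ with $D_i\equiv d_iA$ and $E_k\equiv e_kA$, subject to $\sum_k b_{ik}e_k=d_i$. Seshadri constants of Cartier divisors depend only on the numerical class (blowing up along a Cartier divisor is an isomorphism, and $\QQ$-nefness is numerical), which gives $\epsilon_{D_i}(A)=1/d_i$ and $\epsilon_{E_k}(A)=1/e_k$. Moreover, since each nonzero effective divisor has $E_k\equiv e_kA$ with $e_k>0$, the Nakai-Moishezon criterion shows each $E_k$ is ample. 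Defining
\begin{align*}
c'_k=\sum_{i=1}^q \frac{c_i b_{ik} e_k}{d_i}\geq 0
\end{align*}
and using additivity of local heights in the form $\lambda_{D_i,v}=\sum_k b_{ik}\lambda_{E_k,v}+O(1)$, the weighted sum transforms as
\begin{align*}
\sum_{i=1}^q c_i\epsilon_{D_i}(A)m_{D_i,S}(P) = \sum_{k=1}^r c'_k\epsilon_{E_k}(A)m_{E_k,S}(P)+O(1).
\end{align*}

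The step I expect to be the real content is verifying that the $E_k$ with weights $c'_k$ inherit $m$-subgeneral position from the $D_i$ with weights $c_i$. It suffices to check this for irreducible nonempty $W\subset X$. A direct rearrangement yields
\begin{align*}
\alpha'(W):=\sum_{k:\,W\subset E_k} c'_k = \sum_{i:\,W\subset\Supp D_i}\frac{c_i}{d_i}\sum_{k:\,W\subset E_k} b_{ik}e_k\leq \sum_{i:\,W\subset\Supp D_i} c_i =\alpha(W),
\end{align*}
since for each $i$ with $W\subset\Supp D_i$ the inner $k$-sum is bounded by $\sum_k b_{ik}e_k=d_i$, while $i$ with $W\not\subset\Supp D_i$ contribute zero. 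Thus $\codim W\geq \alpha(W)+n-m\geq \alpha'(W)+n-m$, so $m$-subgeneral position is preserved. Applying Theorem \ref{nochka23irred} to the ample prime divisors $E_1,\ldots,E_r$ with weights $c'_1,\ldots,c'_r$ then gives a proper Zariski closed $Z\subset X$ outside which $\sum_k c'_k\epsilon_{E_k}(A)m_{E_k,S}(P)<(2m-n+1+\epsilon)h_A(P)$, and combining with the identity above completes the proof.
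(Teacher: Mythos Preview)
Your proof is correct and follows essentially the same approach as the paper's. Both arguments decompose each $D_i$ into prime components, use $\rho=1$ to compute all Seshadri constants as reciprocals of numerical coefficients, redistribute the weights so that the left-hand side is unchanged, verify that the new weighted family of (ample, irreducible) divisors remains in $m$-subgeneral position via the same inequality $\alpha'(W)\leq\alpha(W)$, and conclude by invoking Theorem~\ref{nochka23irred}. The only cosmetic difference is that the paper lists the prime components with repetition as $D_{ij}$ while you group them as $\sum_k b_{ik}E_k$ with distinct $E_k$; the resulting weights agree.
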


\begin{proof}
For $i=1,\ldots, q$, write 
\begin{align*}
D_i=\sum_{j=1}^{r_i}D_{ij}
\end{align*}
where the $D_{ij}$ are (not necessarily distinct) effective divisors with $\Supp D_{ij}$ irreducible. We have
\begin{align*}
m_{D_{i},S}(P)=\sum_{j=1}^{r_i}m_{D_{ij},S}(P).
\end{align*}
Since $\rho=1$, we have $D_{ij}\equiv a_{ij}A$, for some $a_{ij}> 0$, and  $\epsilon_{D_{ij}}(A)=\frac{1}{a_{ij}}$. Similarly, $D_i\equiv \sum_{j=1}^{r_i} a_{ij}A$, and so
\begin{align*}
\epsilon_{D_i}(A)=\frac{1}{\sum_{i=1}^{r_i}a_{ij}}.
\end{align*}

Then we find
\begin{align*}
\sum_{j=1}^{r_i}\frac{\epsilon_{D_i}(A)}{\epsilon_{D_{ij}}(A)}=1.
\end{align*}

Now we write
\begin{align}
\label{redtoirred}
\sum_{i=1}^q c_i\epsilon_{D_{i}}(A) m_{D_{i},S}(P)=\sum_{i=1}^q \sum_{j=1}^{r_i}\left(c_i\frac{\epsilon_{D_{i}}(A)}{\epsilon_{D_{ij}}(A)}\right)\epsilon_{D_{ij}}(A)D_{ij}.
\end{align}

We claim that the divisors $D_{ij}$ with weights $c_i\frac{\epsilon_{D_{i}}(A)}{\epsilon_{D_{ij}}(A)}$, $i=1,\ldots, q$, $j=1,\ldots, r_i$, are in $m$-subgeneral position. Let $W\subset X$ be a nonempty closed subset. Then
\begin{align*}
\sum_{\substack{i,j\\ W\subset \Supp D_{ij}}}c_i\frac{\epsilon_{D_{i}}(A)}{\epsilon_{D_{ij}}(A)} \leq \sum_{\substack{i\\ W\subset \Supp D_{i}}}c_i\sum_{j=1}^{r_i}\frac{\epsilon_{D_{i}}(A)}{\epsilon_{D_{ij}}(A)} \leq \sum_{\substack{i\\ W\subset \Supp D_{i}}}c_i.
\end{align*}
Since the divisors $D_1,\ldots, D_q$, with weights $c_1,\ldots, c_q$, are in $m$-subgeneral position, the claim follows. We complete the proof by using  \eqref{redtoirred} and Theorem \ref{nochka23irred}, applied to the ample divisors $D_{ij}$ (with appropriate weights).
\end{proof}

\end{document}